\newcommand{\ol}[1]{\overline{#1}}
\numberwithin{equation}{section}
\newcommand{\C}{\ensuremath{\mathbb{C}}}
\newcommand{\R}{\ensuremath{\mathbb{R}}}
\newcommand{\Rd}{\ensuremath{\mathbb{R}^d}}
\newcommand{\Rn}{\ensuremath{\mathbb{R}^n}}
\newcommand{\N}{\ensuremath{\mathbb{N}}}
\newcommand{\M}{\ensuremath{\mathcal{M}}}
\newcommand{\HS}{\ensuremath{\mathcal{H}}}
\newcommand{\sd}{\, d}
\newcommand{\eps}{\ensuremath{\varepsilon}}
\newcommand{\weight}[1]{\langle #1\rangle}
\newcommand{\Div}{\operatorname{div}}
\newcommand{\E}{\mathcal{E}}
\newcommand{\ve}{\mathbf{v}}
\newcommand{\vc}[1]{\mathbf{#1}}
\newcommand{\tn}[1]{\mathbb{#1}}
\newcommand{\Chi}{\mathcal{X}}
\newcommand{\Linftyw}{\ensuremath{L^\infty_{\omega\ast}}}
\newcommand{\Ha}{\ensuremath{\mathcal{H}}}
\newcommand{\STC}{\sigma}
\newcommand{\RM}{\lambda}
\newtheorem{thm}{THEOREM}[section]
\newtheorem{lem}[thm]{Lemma}
\newtheorem{defn}[thm]{Definition}
\newtheorem{theorem}[thm]{Theorem}
\newtheorem{claim*}{Claim}
\newtheorem{assumption}[thm]{Assumption}
\newtheorem{rem}[thm]{Remark}
\newenvironment{proof*}[1]{{\bf Proof
#1:}}{\hspace*{\fill}\rule{1.2ex}{1.2ex}\\ } 
\newenvironment{proof}{{\bf
Proof:\,}}{\hspace*{\fill}\rule{1.2ex}{1.2ex}\\ }
\newcommand{\no}{\mathbf{n}}
\newcommand{\Je}{\mathbf{J}}
\newcommand{\phie}{\boldsymbol{\varphi}}
\newcommand{\etae}{\boldsymbol{\eta}}
\newcommand{\nue}{\boldsymbol{\nu}}
\newcommand{\pe}{\boldsymbol{p}}
\begin{document}

\begin{titlepage}
\title{On Sharp Interface Limits for Diffuse Interface Models for Two-Phase Flows}
\author{Helmut Abels\footnote{Fakult\"at f\"ur Mathematik,  
Universit\"at Regensburg,
93040 Regensburg,
Germany, e-mail: {\sf helmut.abels@mathematik.uni-regensburg.de}}\ \ and Daniel Lengeler\footnote{Fakult\"at f\"ur Mathematik,  
Universit\"at Regensburg,
93040 Regensburg,
Germany, e-mail: {\sf daniel.lengeler@mathematik.uni-regensburg.de}}}
\end{titlepage}
\maketitle

\begin{abstract}
We discuss the sharp interface limit of a diffuse interface model for a two-phase flow of two partly miscible viscous Newtonian fluids of different densities, when a certain parameter $\eps>0$ related to the interface thickness tends to zero. In the case that the mobility stays positive or tends to zero slower than linearly in $\eps$ we will prove that weak solutions tend to varifold solutions of a corresponding sharp interface model. But, if the mobility tends to zero faster than $\eps^3$ we will show that certain radially symmetric solutions tend to functions, which will not satisfy the Young-Laplace law at the interface in the limit. 
\end{abstract}
{\small\noindent
{\bf Mathematics Subject Classification (2000):}
Primary: 76T99; Secondary:
35Q30, 
35Q35, 
35R35,
76D05, 
76D45\\ 
{\bf Key words:} Two-phase flow, diffuse interface model, sharp interface limit, Navier-Stokes system, Free boundary problems
}

\section{Introduction}

The present contribution is devoted to the study of the relations between so-called diffuse and sharp interface models for the flow of two  viscous incompressible Newtonian fluids. Such two-phase flows play a fundamental role in many fluid dynamical applications in physics, chemistry, biology, and the engineering sciences. There are two basic types of models namely the (classical) sharp interface models, where the interface $\Gamma(t)$ between the fluids is modeled as a (sufficiently smooth) surface and so-called diffuse interface models, where the ``sharp'' interface $\Gamma(t)$ is replaced by an interfacial region, where a suitable order parameter (e.g. the difference of volume fractions) varies smoothly, but with a large gradient between two distinguished values (e.g. $\pm 1$ for the difference of volume fractions). Then the natural question arises how diffuse and sharp interface models are related if a suitable parameter $\eps>0$, which is related to the width of the diffuse interface, tends to zero. There are several results on this question, which are based on formally matched asymptotics calculations. But so far there are very few mathematically rigorous convergence results.

More precisely, we study throughout the paper the sharp interface limit of the following diffuse interface model:
\begin{alignat}{2}\label{eq:NSCH1}
  \rho\partial_t \ve + \left(\rho\ve+ \frac{\partial \rho}{\partial c} \vc{J}\right)\cdot \nabla \ve - \Div (\nu (c)D\ve) + \nabla p &= -\eps\Div
  (a(c)\nabla c \otimes \nabla c) 
  &\ &\text{in}\ Q, \\\label{eq:NSCH2}
  \Div \ve &=0 &&\text{in}\ Q, \\\label{eq:NSCH3}
  \partial_t c + \ve\cdot\nabla c &= \Div (m_\eps(c)\nabla \mu) & &\text{in}\ Q, \\\label{eq:NSCH4}
\mu &= \eps^{-1}f'(c) - \eps\Delta c& & \text{in}\ Q,\\\label{eq:NSCH5}
  \ve|_{\partial\Omega} &=0 & & \text{on}\ S,\\\label{eq:NSCH6}
  \no_{\partial\Omega}\cdot \nabla  c|_{\partial\Omega} = \no_{\partial\Omega}\cdot  \nabla \mu|_{\partial\Omega} &= 0
  & & \text{on}\ S,\\\label{eq:NSCH7}
  (\ve,c)|_{t=0} &= (\ve_0,c_0) && \text{in}\ \Omega,
\end{alignat}
where $Q= \Omega\times (0,\infty), S= \partial\Omega\times (0,\infty)$, $\Omega\subseteq \Rn$ is a suitable domain, and
$
  \vc{J}= - m_\eps(c) \nabla \mu.
$
Here $c=c_2-c_1$ is the volume fraction difference of the fluids, $\rho=\rho(c)$ is the density of the fluid mixture, depending explicitly on $c$ through $\rho(c)=\tfrac{\tilde{\rho}_2-\tilde{\rho}_1}2 c-\tfrac{\tilde{\rho}_1+\tilde{\rho}_2}2$, where $\tilde{\rho}_j$ is the specific density of fluid $j=1,2$, and $f$ is a suitable
``double-well potential'' e.g. $f(c)=\frac18(1-c^2)^2$. Precise assumptions will be made below. Moreover,
$\eps>0$ is a small parameter related to the interface thickness, $\mu$
is the so-called chemical potential, $m_\eps(c)>0$ a mobility coefficient related to the strength of diffusion in the mixture and $a(c)$ is a coefficient in front of the $|\nabla c|^2$-term in the free energy of the system. Finally, $\no_{\partial\Omega}$ denotes the exterior normal of $\partial\Omega$. 
The model was derived by A., Garcke, and Gr\"un \cite{AbelsGarckeGruen2}. In the case $\rho(c)\equiv const.$ it coincides with the so-called ``Model H'' in Hohenberg and Halperin~\cite{HohenbergHalperin}, cf. also
Gurtin et al.~\cite{GurtinTwoPhase}. Existence of weak solutions for this system in the case of a bounded, sufficiently smooth domain $\Omega$ and for a suitable class of singular free energy densities $f$ was proved by A., Depner, and Garcke~\cite{AbelsDepnerGarcke}. We refer to the latter article for further references concerning analytic results for this diffuse interface model in the case $\rho(c)\equiv const.$ and related models.

In \cite{AbelsGarckeGruen2} the sharp interface limit $\eps\to 0$ was discussed with the method of formally matched asymptotics. It was shown that for the scaling $m_\eps(c)\equiv \tilde{m}\eps^\alpha$ with $\alpha =0,1$, $\tilde{m}>0$, solutions of the system \eqref{eq:NSCH1}-\eqref{eq:NSCH5} converges to solutions of
\begin{alignat}{2}\label{eq:1}
  \rho^\pm\partial_t \ve + (\rho^\pm \ve + \tfrac{\tilde{\rho}_2-\tilde{\rho}_1}2\vc{J})\cdot \nabla \ve  - \Div \tn{T}^\pm(\ve,p) &= 0 &\ & \text{in}\
  \Omega^\pm(t), t>0, \\\label{eq:2}
\Div \ve &= 0 &\ & \text{in}\ \Omega^\pm(t), t>0, \\\label{eq:2'}
m_0 \Delta\mu &= 0 &\ & \text{in}\ \Omega^\pm(t), t>0, \\\label{eq:3}
-\no\cdot [\tn{T}(\ve,p)] &= \STC H \no  &\ & \text{on}\ \Gamma(t), t>0,
  \\\label{eq:4}
V - \no\cdot \ve|_{\Gamma(t)}&=  - [\tfrac{m_0}2\no\cdot \nabla\mu] && \text{on}\ \Gamma(t),t>0,\\\label{eq:4'}
\mu|_{\Gamma(t)} & = \STC H&& \text{on}\ \Gamma(t),t>0,
\end{alignat}
with $\vc{J}= -m_0 \nabla \mu$. Here $\no$ denotes the unit
normal of $\Gamma(t)$ that points inside $\Omega^+(t)$ and $V$ and
$H$ the normal velocity and scalar mean curvature of $\Gamma(t)$
with respect to $\no$. Moreover, by $[\cdot]$ we denote
the jump of a quantity across the interface in direction of $\no$, i.e.,
$[f](x)= \lim_{h\to 0}(f(x+h\no)-f(x-h\no))$ for $x\in \Gamma(t)$. Furthermore, $\STC$ is
a surface tension coefficient determined uniquely by $f$ and $m_0=\tilde{m}$ if $\alpha=0$ and $m_0=0$ if $\alpha=1$ is a mobility constant. Implicitly it is
assumed that $\ve,\mu$ do not jump across $\Gamma(t)$,
i.e., $$[\ve]=[\mu]=0\qquad \text{on}\ \Gamma(t), t>0.$$ 
In the following we close the system with the boundary and initial conditions
\begin{alignat}{2}\label{eq:5}
\ve|_{\partial\Omega} &= 0 &\ & \text{on}\ \partial\Omega,t>0, \\\label{eq:5'}
 \no_{\partial\Omega}\cdot m_\eps(c)\nabla  \mu|_{\partial\Omega}  &= 0 &\ & \text{on}\ \partial\Omega,t>0,\\
\Omega^+(0)&= \Omega_0^+,&& \label{eq:6'}\\\label{eq:6}
\ve|_{t=0} &= \ve_0 &\ & \text{in}\ \Omega,
\end{alignat}
where $\ve_0, \Omega_0^+$ are given initial data satisfying
$\partial\Omega_0^+\cap \partial\Omega=\emptyset$.  
Equations (\ref{eq:1})-(\ref{eq:2}) describe the conservation of linear
momentum and mass in both fluids, and (\ref{eq:3}) is the balance of forces
at the boundary. 
The equations for $\ve$ are
complemented by the non-slip condition \eqref{eq:5} at the boundary of
$\Omega$. 
The conditions \eqref{eq:2'}, \eqref{eq:5'} describe together with
\eqref{eq:4} a continuity equation for the mass of the phases, and
\eqref{eq:4'} relates the chemical potential $\mu$ to the $L^2$-gradient
of the surface area, which is given by the mean curvature of the
interface.

We note that in the case $\alpha=1$, i.e., $m_0=0$, (\ref{eq:4}) describes the usual kinematic condition that the 
interface is transported by the flow of the surrounding fluids and
(\ref{eq:1})-(\ref{eq:6}) reduces to the classical model of a two-phase
Navier--Stokes flow. Existence of strong solutions locally in time was first proved by Denisova and
Solonnikov~\cite{DenisovaTwoPhase}. We refer to Pr\"uss and Simonett~\cite{PruessSimonettTwoPhaseNSt} and K\"ohne et al.~\cite{KoehnePruessWilkeTwoPhase} for more recent results and further references. Existence of generalized solutions globally in times was shown by Plotnikov~\cite{PlotnikovTwoPhase} and
A.~\cite{GeneralTwoPhaseFlow,ReviewGeneralTwoPhaseFlow}. 
On the other hand, if $\alpha=0$, $m_0>0$, respectively,  the equations (\ref{eq:2'}), (\ref{eq:4'}), (\ref{eq:5'})
are a variant of the Mullins--Sekerka flow of a family of interfaces with an additional convection term $\no\cdot \ve|_{\Gamma(t)}$. In the case $\tilde{\rho}_1=\tilde{\rho}_2$ existence of weak solutions for large times and general initial data was proved by A. and R\"oger~\cite{NSMS} and existence of strong solutions locally in time and stability of spherical droplets was proved by A. and Wilke~\cite{StrongNSMS}.

In the following we address the following question: Under which assumptions on the behavior of $m_\eps(c)$ as $\eps\to 0$ do weak solutions of \eqref{eq:NSCH1}-\eqref{eq:NSCH7} converge to weak/generalized solutions of \eqref{eq:1}-\eqref{eq:6}? In this paper we provide a partial answer to that question. If one assumes e.g. $m_\eps(c)= \tilde{m}\eps^\alpha$, the results in the following will show that convergence holds true in the case $\alpha\in [0,1)$. More precisely, we will show that weak solutions of \eqref{eq:NSCH1}-\eqref{eq:NSCH7} converge to so-called varifold solutions of \eqref{eq:1}-\eqref{eq:6}, which are defined in the spirit of Chen~\cite{ChenLimitCH}.  But in the case $\alpha\in (3,\infty)$ we will construct radially symmetric solutions of \eqref{eq:NSCH1}-\eqref{eq:NSCH4} in the domain $\Omega=\{x\in\R: 1<|x|<M\}$ with suitable inflow and outflow boundary conditions, which do not converge to a solution of \eqref{eq:1}-\eqref{eq:4'}.  In particular, the pressure $p$ in the limit $\eps\to 0$ satisfies 
\begin{equation*}
   [p]=\sigma\kappa(t) H\qquad \text{on}\ \Gamma(t)= \partial B_{R(t)}(0),
\end{equation*}
where $R(t),\kappa(t)\to_{t\to\infty}\infty$ and $\ve$ is independent of $t$ and smooth in $\Omega$. This shows that the Young-Laplace law \eqref{eq:3} is not satisfied.  We note that these results are consistent with the numerical studies of Jacqmin, where a scaling of the mobility as $m_\eps(c)=\tilde{m}\eps^\alpha$ with $\alpha\in [1,2)$ was proposed and considered.

The structure of the article is as follows: First we introduce some notation and preliminary results in Section~\ref{sec:Prelim}. Then we prove our main result on convergence of weak solutions of \eqref{eq:NSCH1}-\eqref{eq:NSCH7} to varifold solutions of \eqref{eq:1}-\eqref{eq:6} in the case that the mobility $m_\eps(c)$ tends to zero as $\eps\to 0$ slower than linearly in Section~\ref{sec:SharpInterfaceLimit}. Finally, in Section~\ref{sec:NonConvergence}, we consider certain radially symmetric solutions of  \eqref{eq:NSCH1}-\eqref{eq:NSCH7} and show that these do not converge to a solution of \eqref{eq:1}-\eqref{eq:4'} if the mobility tends to zero too fast as $\eps\to 0$.

\section{Notation and Preliminaries}\label{sec:Prelim}

Let $U\subseteq \R^d$ be open.
 Then $\M(U;\R^N)$, $N\geq 1$, denotes the space of all finite $\R^N$-valued Radon measures on $U$. 
By the Riesz representation theorem $\M(U;\R^N)=C_0(U;\R^N)'$, cf. e.g.
\cite[Theorem 1.54]{AmbrosioEtAl}. Moreover, $\M(U):= \M(U,\R)$. Given $\lambda\in \M(U;\R^N)$ we denote by
$|\lambda|$ the total variation measure defined by 
\begin{equation*}
  |\lambda|(A) = \sup \left\{\sum_{k=0}^\infty |\lambda(A_k)|: A_k\in \mathcal{B}(U)\ \text{pairwise disjoint}, A= \bigcup_{k=0}^\infty A_k \right\}
\end{equation*}
for every $A\in \mathcal{B}(U)$, where $\mathcal{B}(U)$ denotes the
$\sigma$-algebra of Borel sets of $U$.  Moreover,
$\frac{\lambda}{|\lambda|}\colon U\to \R^N$
denotes the Radon-Nikodym derivative of $\lambda$ with respect to $|\lambda|$.
The restriction of a measure $\mu$
to a $\mu$-measurable set $A$ is denoted by $(\mu\lfloor A)(B)= \mu(A\cap B)$.
Furthermore, the $s$-dimensional Hausdorff measure on $\R^d$, $0\leq s\leq d$, is
denoted by $\Ha^s$.  
Recall that 
\begin{eqnarray*}
  BV(U)&=& \{f \in L^1(U): \nabla f \in \M(U;\R^d)\}\\
  \|f\|_{BV(U)}&=& \|f\|_{L^1(U)}+ \|\nabla f\|_{\M(U;\R^d)},
\end{eqnarray*}
where $\nabla f$ denotes the distributional derivative. Moreover,
$BV(U;\{0,1\})$ denotes the set of all $\Chi\in BV(U)$ such that
$\Chi(x)\in\{0,1\}$ for almost all $x\in U$.


A set $E\subseteq U$ is said to have finite perimeter in $U$ if
$\Chi_E\in BV(U)$. By the structure theorem of sets of finite perimeter
$|\nabla \Chi_E|=\Ha^{d-1}\lfloor \partial^\ast E$, where
$\partial^\ast E$ is the so-called reduced boundary of $E$ and for all
$\boldsymbol{\varphi}\in C_0(U,\Rd)$ 
\begin{equation*}
 -\weight{\nabla \Chi_E,\boldsymbol{\varphi}} = \int_E \Div \boldsymbol{\varphi} \sd x =
 -\int_{\partial^\ast E} \boldsymbol{\varphi}\cdot \no_E \sd \Ha^{d-1}, 
\end{equation*}
where $\no_E(x) = \frac{\nabla \Chi_E}{|\nabla \Chi_E|}$,
cf. e.g. \cite{AmbrosioEtAl}. Note that, if $E$ is a domain with
$C^1$-boundary, then $\partial^\ast E = \partial E$ and $\no_E$ coincides
with the interior unit normal.  

As usual the space of smooth and compactly supported functions in an open set
$U$ is denoted by $C_0^\infty(U)$. Moreover, $C^\infty(\ol{U})$ denotes the
set of all smooth functions $f\colon U\to \C$ such that all derivatives have continuous extensions on $\ol{U}$.   
 For $0<T\leq \infty$, we denote by
$L^p_{loc}([0,T);X)$, $1\leq p\leq \infty$,
the space of all strongly measurable $f\colon (0,T)\to X $ such that $f\in
L^p(0,T';X)$ for all $0<T'<T$. Here $L^p(M)$ and $L^p(M;X)$ denote the standard Lebesgue spaces for scalar and $X$-valued functions, respectively. 
Furthermore, 
$C_{0,\sigma}^\infty(\Omega)= \{\boldsymbol{\varphi}\in C_0^\infty(\Omega)^d:
\Div \boldsymbol{\varphi} =0\}$ and
\begin{equation*}
  L^2_\sigma(\Omega)\,:=\, \overline{C_{0,\sigma}^\infty(\Omega)}^{L^2(\Omega)}.
\end{equation*}
If $Y=X'$ is a dual space and  $Q\subseteq\R^N$ is open, then
$\Linftyw(Q;Y)$ denotes the space of all functions $\nu\colon Q\to Y$
that are weakly-$\ast$ measurable and essentially bounded, i.e.,
\begin{equation*}
  x\mapsto 
  \weight{\nu_x,F(x,.)}_{X',X}
\end{equation*}
is measurable for each $F\in L^1(Q;X)$ and
\begin{equation*}
  \|\nu\|_{\Linftyw(Q;Y)}:= \text{ess sup}_{x\in Q}\|\nu_x\|_{Y}<\infty.
\end{equation*}
Moreover, we note that there is  a separable Banach space $X$ such that
$X'=BV(\Omega)$, cf. \cite{AmbrosioEtAl}. 
As a
consequence \cite{Edwards} we obtain that $\Linftyw(0,T; BV(\Omega))=
\big(L^1(0,T;X)\big)^*$ and that uniformly bounded sets in $
\Linftyw(0,T; BV(\Omega))$ are weakly*-precompact.


\section{Sharp Interface Limit}\label{sec:SharpInterfaceLimit}

In this section we discuss the relation between (\ref{eq:1})-(\ref{eq:6}) and its diffuse
interface analogue (\ref{eq:NSCH1})-(\ref{eq:NSCH7}).

\begin{assumption}\label{assumptions}
We assume that the domain $\Omega\subset\R^d$, $d=2,3$ is bounded and smooth. Furthermore, we assume that there exist constants $c_0,C_0>0$ such that
\begin{itemize}
 \item  $f\in C^3(\R)$, $f(c)\geq 0$, $f(c)=0$ if and only if $c=-1,1$, and $f''(c)\geq c_0 |c|^{p-2}$ if $|c|\geq 1-c_0$ for some constant $p\ge 3$
\item $\rho,a,\nu\in C^1(\R)$ with $c_0\le \rho,a,\nu\le C_0$ and \[\rho(c)=\frac{\tilde\rho_2+\tilde\rho_1}{2} + \frac{\tilde\rho_2-\tilde\rho_1}{2}c\] for $c\in [-1,1]$
\item  $m_\eps,m_0\in C^1(\R)$, $0\le m_\eps,m_0\le C_0$, $m_\eps\rightarrow_{\eps\rightarrow 0} m_0$ in $C^1(\R)$, and either $m_0\ge c_0$ or $m_0\equiv 0$. If $m_0\equiv 0$, then $m_\eps\ge \overline{m}_\eps$ for constants $\overline{m}_\eps>0$ with $\overline{m}_\eps\rightarrow_{\eps\rightarrow 0} 0$.
\end{itemize}
\end{assumption}

The stronger assumption $p\ge 3$ (compared to $p>2$ in \cite{ChenLimitCH}) is needed here for the uniform estimate of $\ve_\eps \cdot \nabla c_\eps = \Div (\ve_\eps c_\eps)$ in $L^2(0,T;H^{-1}(\Omega))$. A possible choice for the homogeneous free energy density is $f(s) = (s^2-1)^2$. Moreover, let $\STC= \int_{-1}^1\sqrt{f(s)/2}\, ds$ and $A(s)=\int_0^s\sqrt{a(\tau)}\, d\tau$. 

Now, let us consider the energy identities corresponding to our two systems. We recall that every sufficiently smooth solution of the Navier--Stokes/Mullins--Sekerka system (\ref{eq:1})-(\ref{eq:6}) satisfies 
\begin{equation}\label{eq:en_id_nsms'}
  \frac{d}{dt} \frac12 \int_\Omega \rho(c)\,|\ve|^2\sd x +\STC \frac{d}{dt}
  \Ha^{d-1}(\Gamma) = - \int_\Omega \nu(c) |D\ve|^2 \sd x - \int_\Omega m_0(c)|\nabla \mu|^2 \sd x,
\end{equation}
where $c(t,x)=-1+2\chi_{\Omega^+(t)}(x)$. On the other hand, every sufficiently smooth solution of (\ref{eq:NSCH1})-(\ref{eq:NSCH7}) satisfies
\begin{equation}\label{eq:en_id_nsch}
  \frac{d}{dt} \frac12 \int_\Omega \rho(c)\,|\ve|^2\sd x +\frac{d}{dt} \E_\eps(c) = - \int_\Omega \nu(c) |D\ve|^2 \sd x -\int_\Omega m_\eps(c)|\nabla \mu|^2 \sd x, 
\end{equation}
where 
\begin{equation*}
  \E_\eps(c) = \int_\Omega \left(\eps\frac{|\nabla A(c)|^2}{2} + \frac{f(c)}{\eps}\right) \sd x
\end{equation*}
is the free energy. Moreover, by Modica and Mortola~\cite{ModicaMortola1} or Modica~\cite{Modica1}, for $A(c)=c$, we have
\begin{equation*}
  \E_\eps \to_{\eps\to 0} \mathcal{P} \qquad \text{w.r.t.}\ L^1\text{-}\Gamma\text{-convergence},
\end{equation*}
where
\begin{equation*}
  \mathcal{P}(u)=
  \begin{cases}
    \STC\, \Ha^{d-1}(\partial^\ast E) & \text{if}\ u = -1 + 2\chi_E \ \text{and}\ E\ \text{has finite perimeter},\\
+\infty& \text{else}.
  \end{cases}
\end{equation*}
Here, $\partial^\ast E$ denotes the reduced boundary. Note that $\partial^\ast E= \partial E$ if $E$ is a sufficiently regular domain. 
Therefore, we see that the energy identity
\eqref{eq:en_id_nsms'} is formally identical to the sharp interface
limit of the energy identity \eqref{eq:en_id_nsch} of the diffuse interface
model (\ref{eq:NSCH1})-(\ref{eq:NSCH7}).

We will now adapt the arguments of Chen~\cite{ChenLimitCH}, see also A. and R\"oger~\cite{NSMS}, to show that, as
$\eps\to 0$, solutions of the diffuse interface model (\ref{eq:NSCH1})-(\ref{eq:NSCH7})
converge to \emph{varifold solutions} of the system (\ref{eq:1})-(\ref{eq:6}). Let $Q=\Omega\times (0,\infty)$ and $G_{d-1}:=S^{d-1}/\sim$ where $\nue_0\sim\nue_1$ for $\nue_0,\nue_1\in S^{d-1}$ iff $\nue_0=\pm\nue_1$ and $S^{d-1}$ is the unit sphere in $\R^d$.
\begin{defn}\label{defn:VarifoldSolution}
 Let $\ve_0\in L^2_\sigma(\Omega)$ and $E_0\subset \Omega$ be a set of finite
 perimeter. Then $(\ve,E,\mu,V)$ if $m_0>0$ and $(\ve,E,V)$ else is called a \emph{varifold solution} of
  (\ref{eq:1})-(\ref{eq:6}) with initial values $(\ve_0,E_0)$ if the following conditions are satisfied:
  \begin{enumerate}
  \item $\ve\in L^2((0,\infty);H^1(\Omega)^d)\cap L^\infty((0,\infty);L^2_\sigma(\Omega))$; $\mu\in L^2_{loc}([0,\infty);H^1(\Omega)), \nabla\mu\in L^2((0,\infty);L^2(\Omega)^d)$ if $m_0>0$.
\item $E=\bigcup_{t\geq 0} E_t\times \{t\}$ is a measurable subset of
  $\Omega\times [0,\infty)$ such that $\chi_E \in
  C([0,\infty);L^1(\Omega))\cap L^\infty_{\omega^\ast}((0,\infty);BV(\Omega))$ and $|E_t| = |E_0|$ for all $t\geq 0$.
\item $V$ is a Radon measure on $\overline\Omega\times G_{d-1}\times (0,\infty)$ 
  such that $V= V^t dt$ where $V^t$ is a Radon measure on $\overline\Omega \times G_{d-1}$ for almost all $t\in (0,\infty)$, i.e., a general varifold in $\overline\Omega$. Moreover, for almost all $t\in (0,\infty)$ $V^t$ has the representation
  \begin{equation}\label{eqn:V}
    \int_{\overline\Omega \times G_{d-1}} \psi (x,\pe) \sd V^t(x,\pe)  = \sum_{i=1}^d \int_{\overline\Omega}
    b_i^t(x)\, \psi(x,\pe_i^t(x))\sd \lambda^t(x) 
  \end{equation}
  for all $\psi\in
    C(\overline\Omega\times G_{d-1})$. Here, for almost all $t\in (0,\infty)$, $\lambda^t$ is a Radon measure on $\overline{\Omega}$, and the $\lambda^t$-measurable functions $b_i^t,\pe_i^t$ are $\R$- and $G_{d-1}$-valued, respectively, such that 
\[0\leq b_i^t \leq 1,\quad \sum_{i=1}^d b_i^t \geq 1,\quad \sum_{i=1}^d
     \pe_i^t\otimes \pe_i^t = I\qquad \lambda^t\text{-a.e.}\]
and
\[\frac{|\nabla\chi_{E_t}|}{\lambda^t} \leq \frac1{2\STC}.\]
\item For $c:=-1+2\chi_E$, $\Je:=-m_0(c)\nabla\mu$ if $m_0>0$ and $\Je:=0$ else as well as $\tilde \Je:=\frac{\partial\rho}{\partial c}(c)\Je$ we have
\begin{align}\nonumber
  \int_{Q}\Big(-\rho(c) \ve\cdot\partial_t \phie  -  \ve\otimes \big(\rho(c)\ve + \tilde \Je\big): \nabla \phie +\nu(c) D\ve:D\phie\Big)  \,d(x,t)\\
 -\int_\Omega \rho(c|_{t=0})\,\ve_0\cdot\phie|_{t=0} \,dx 
   = - \int_0^\infty \weight{\delta V^t,\phie} \sd t \label{eq:ns'}
\end{align}
for all $\phie \in C^\infty_0([0,\infty); C_{0,\sigma}^\infty(\Omega))$ and
\begin{align}
  &2\int_{E}\partial_t\psi + \Div(\psi \ve) \,d(x,t) + \int_Q\Je\cdot\nabla\psi\, d(x,t)+\int_{E_0} \psi|_{t=0}\,dx=0 \label{eq:mu'} 
\end{align}
for all $\psi\in
C^\infty_0([0,\infty)\times \overline{\Omega})$. Here
\begin{equation*}{}
  \weight{\delta V^t,\phie}:= \int_{\ol{\Omega}\times G_{d-1}} ({I}-\vc{p}\otimes \vc{p}): \nabla \phie \sd (x,\vc{p})\quad \text{for all}\ \phie \in C^\infty(\overline{\Omega};\R^d).
\end{equation*}
Furthermore, if $m_0>0$ we have
\begin{equation}\label{eq:fvar}
  2\int_{E_t} \Div (\mu\etae)\, dx =  \left\langle \delta
    V^t,\etae\right\rangle 
\end{equation}
for all $\etae \in C^1_0(\Omega;\R^d)$ and almost all $t\in (0,\infty)$.
\item Finally, for almost all $0< s < t <\infty$
  \begin{align}\nonumber
    \frac12\int_\Omega\rho(c(t))|\ve(t)|^2\,dx + \lambda^t (\ol{\Omega})
+ \int_s^t\int_\Omega \left(\nu(c) |D\ve|^2 - \Je\cdot\nabla\mu\right)\, d(x,\tau)\\\label{eq:EnergyEstim} \leq \frac12\int_\Omega\rho(c(s))|\ve(s)|^2\,dx+\lambda^s (\ol{\Omega}).
  \end{align}
\end{enumerate}
\end{defn}
We define the free energy density by
\begin{equation*}
  e_\eps (c) = \eps \frac{|\nabla A(c)|^2}{2} + \frac{f(c)}{\eps}.
\end{equation*}
\smallskip
In \cite{ModelH} the existence of global weak solutions is shown for a class of singular free energies. We note that this proof can be easily carried over to the present situation with only minor modifications and even some simplifications since $f$ is non-singular. Throughout this paper we will use the definition of weak solutions in \cite{ModelH}. By this definition we have 
\begin{align*}
&\ve_\eps\in BC_\omega([0,\infty);L^2_\sigma(\Omega))\cap L^2((0,\infty);H^1(\Omega)^d),\\
&c_\eps\in BC_\omega([0,\infty);H^1(\Omega))\cap L^2_{loc}([0,\infty);H^2(\Omega)), f(c_\eps)\in L^2_{loc}([0,\infty);L^2(\Omega)),\\
&\mu\in L^2_{loc}([0,\infty);L^2(\Omega)), \nabla \mu\in L^2([0,\infty);L^2(\Omega)^d),
\end{align*}
and 
  \begin{align}\nonumber\label{eq:weakNSCH1}
    \int_Q -\rho(c_\eps)\ve_\eps\cdot\partial_t \phie -  \ve_\eps\otimes\big(\rho(c_\eps)\ve_\eps+\tilde\Je_\eps\big): \nabla \phie + \nu(c_\eps)D\ve_\eps: D \phie\, d(x,t)\\ 
- \int_\Omega \rho(c_{0,\eps})\, \ve_{0,\eps}\cdot\phie|_{t=0}\sd x  = 
\int_Q \eps\,a(c_\eps) \nabla c_\eps\otimes \nabla c_\eps : \nabla \phie\sd (x,t)
  \end{align}
 for $\Je_\eps:=-m_\eps(c_\eps)\nabla\mu_\eps$, $\tilde \Je_\eps:=\frac{\partial\rho}{\partial c}(c_\eps)\Je_\eps$, and all $\phie \in C^\infty_0([0,\infty); C_{0,\sigma}^\infty(\Omega)) $, as well as
  \begin{align}\label{eq:weakNSCH2}
    \int_Q c_\eps \left(\partial_t \psi +\Div (\psi \ve_\eps)\right)\sd (x,t) +
    \int_\Omega c_{0,\eps}\psi|_{t=0}\sd x=\int_Q {m_\eps(c_\eps)}\nabla\mu_\eps \cdot \nabla \psi \sd(x,t)
\end{align}
for all $\psi \in C_0^\infty([0,\infty)\times\ol{\Omega})$, and
\begin{align}\label{eq:weakNSCH3}
    \mu_\eps = \frac{f'(c_\eps)}{\eps} + \eps\, a'(c_\eps)\frac{|\nabla c_\eps|^2}{2} - \eps\, \Div (a(c_\eps)\nabla c_\eps)\quad &\text{ a.e. in } Q,\\ \no_{\partial\Omega}\cdot\nabla c_\eps = 0 \quad&\text{ a.e. on }(0,\infty)\times\partial\Omega.
  \end{align}
   Moreover, we have
\begin{eqnarray}\nonumber
  \lefteqn{\int_\Omega\frac{\rho(c_\eps(t))|\ve_\eps(t)|^2}2\,dx +\E_\eps(c_{\eps}(t))
  +\int_s^t\int_\Omega \nu(c_\eps) |D\ve_\eps|^2 \sd x\sd \tau}\\\label{eq:enin'}
&&+\int_s^t\int_\Omega m_\eps(c_\eps) |\nabla
    \mu_\eps|^2\, d(x,t) \leq \int_\Omega\frac{\rho(c_\eps(s))|\ve_\eps(s)|^2}2\,dx +\E_\eps(c_{\eps}(s))
\end{eqnarray}
for all $t\geq s$ and almost every $s\geq 0$ including $s=0$.

\begin{theorem}\label{thm:SharpInterfaceLimit}
For all $\eps\in (0,1]$, let initial data $(\ve_{0,\eps},c_{0,\eps})\in
  L^2_\sigma(\Omega)\times H^1(\Omega)$ be given such that $\frac1{|\Omega|}\int_\Omega
  c_{0,\eps} \sd x=\bar c\in (-1,1)$ and
  \begin{equation}\label{eq:EstimIV}
    \int_\Omega \frac{|\ve_{0,\eps}|^2}{2} \sd x + \E_\eps(c_{0,\eps}) \leq R 
  \end{equation}
for some $R>0$. 
 Furthermore, let $(\ve_\eps,c_\eps,\mu_\eps)$ be weak solutions of (\ref{eq:NSCH1})-(\ref{eq:NSCH7}) in the interval $[0,\infty)$. 
Then there exists a sequence $(\eps_k)_{k\in\N}$, converging to $0$ as $k\to \infty$, such that the following assertions are true.
  \begin{enumerate}
\item There are $\ve\in L^2((0,\infty);H^1(\Omega)^d)\cap L^\infty((0,\infty);L^2_\sigma(\Omega)^d)$, $\ve_0\in L^2_\sigma(\Omega)$ such that, as $k\rightarrow\infty$,
  \begin{alignat}{2}
\label{eq:Conv2a}
    \ve_{\eps_k} &\rightharpoonup \ve &\quad& \text{in}\
    L^2((0,\infty);H^1(\Omega)^d),\\\label{eq:Conv2b}
    \ve_{\eps_k} &\rightarrow \ve &\quad& \text{in}\
    L^2_{loc}([0,\infty);L^2_\sigma(\Omega)),\\\label{eq:Conv2c}
   \ve_{0,\eps_k} &\rightharpoonup \ve_0 &\quad& \text{in}\
    L^2_\sigma (\Omega).
  \end{alignat} 
If $m_0>0$, there exists a $\mu\in L^2_{loc}([0,\infty);H^1(\Omega))$ with $\nabla\mu\in L^2((0,\infty);L^2(\Omega)^d)$ and such that
  \begin{alignat}{2}
\label{eq:Conv1}
    \mu_{\eps_k} &\rightharpoonup \mu &\qquad& \text{in}\
    L^2_{loc}([0,\infty);H^1(\Omega)).
  \end{alignat} 
  \item There are measurable sets $E\subset \Omega \times [0,\infty)$ and
    $E_0\subset \Omega$ such that, as $k\rightarrow\infty$,
  \begin{alignat}{2}\label{eq:Conv3}
    c_{\eps_k} &\to -1+2\chi_{E}&\quad& \text{a.e. in } \Omega \times (0,\infty) \text{ and in } C^{\frac19}_{loc}([0,\infty);L^2(\Omega))\\
    c_{0,\eps_k}&\to -1+2\chi_{E_0}&\quad& \text{a.e. in } \Omega. 
 \end{alignat}
In particular, we have $\chi_{E}|_{t=0}=\chi_{E_0}$ in $L^2(\Omega)$. 
\item There exist Radon measures $\RM$ and $\RM_{ij}$, $i,j=1,\ldots, d$ on
  $\ol{\Omega}\times [0,\infty)$ such that for every $T>0$, $i,j=1,\ldots, d$, as $k\rightarrow\infty$,
  \begin{alignat}{2}\label{eq:RM1}
    e_{\eps_k} (c_{\eps_k}) \sd x\sd t  &\rightharpoonup^\ast 
    \RM &\quad& \text{in}\ \mathcal{M}(\ol{\Omega}\times [0,T]),\\\label{eq:RM2}
    \eps_k\, a(c_{\eps_k})\,\partial_{x_i}c_{\eps_k}\partial_{x_j} c_{\eps_k} \sd x\sd t &\rightharpoonup^\ast 
    \RM_{ij} && \text{in}\ \mathcal{M}(\ol{\Omega}\times [0,T]).
  \end{alignat}
\item There exists a Radon measure $V= V^t dt$ on $\ol\Omega\times G_{d-1}\times
  (0,\infty)$ such that $(\ve,E,\mu,V)$ if $m_0>0$ and $(\ve,E,V)$ else is a varifold solution of
  (\ref{eq:1})-(\ref{eq:6}) in the sense of
  Definition~\ref{defn:VarifoldSolution} with initial values $(\ve_0,E_0)$ and $\sigma= \int_{-1}^1 \sqrt{f(s)/2}\sd s$. Furthermore,
  \begin{equation}\label{eq:ReprFirstVar}
    \int_0^T \weight{\delta V^t,\etae} \sd t = \int_0^T \int_\Omega
    \nabla \etae : \left(\sd \RM\, I- (d\RM_{ij})_{i,j=1}^d\right)\, dt
  \end{equation}
  for all $\etae \in C^1_0(\Omega \times [0,T];\R^d)$. 
  \item If $\ve_{0,\eps_k}\rightarrow\ve_0$ in $L^2_\sigma(\Omega)$ and $\E_\eps(c_{0,\eps})\rightarrow 2\sigma|\nabla\chi_{E_0}|(\Omega)$ as $k\to\infty$, then \eqref{eq:EnergyEstim} holds for almost all $t\in (0,\infty)$, $s=0$, and $\lambda^0(\overline\Omega)$ replaced by $2\sigma|\nabla\chi_{E_0}|(\Omega)$.
  \end{enumerate}
\end{theorem}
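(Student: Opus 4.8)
The plan is to follow the strategy of Chen~\cite{ChenLimitCH} and Abels and R\"oger~\cite{NSMS}: extract compactness from the dissipative energy estimate~\eqref{eq:enin'} and the bound~\eqref{eq:EstimIV} on the initial energy, and then pass to the limit in the weak formulations \eqref{eq:weakNSCH1}--\eqref{eq:weakNSCH3}. From \eqref{eq:EstimIV} and \eqref{eq:enin'} I would first record the uniform bounds: $\ve_\eps$ is bounded in $L^\infty((0,\infty);L^2_\sigma(\Omega))\cap L^2((0,\infty);H^1(\Omega)^d)$ (the $H^1$-part via the coercivity of $\int_\Omega\nu(c_\eps)|D\ve_\eps|^2$, Korn's inequality and the no-slip condition \eqref{eq:NSCH5}), $\E_\eps(c_\eps(t))$ is bounded uniformly in $t$ and $\eps$, and $\sqrt{m_\eps(c_\eps)}\,\nabla\mu_\eps$ is bounded in $L^2((0,\infty);L^2(\Omega)^d)$. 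These yield \eqref{eq:Conv2a} and \eqref{eq:Conv2c} immediately. When $m_0\ge c_0>0$ the last bound upgrades to a bound on $\nabla\mu_\eps$, and controlling the mean of $\mu_\eps$ (via \eqref{eq:weakNSCH3} and a Poincar\'e--Wirtinger estimate) gives \eqref{eq:Conv1}; when $m_0\equiv 0$ the factor $\|\sqrt{m_\eps}\|_{L^\infty}\to 0$ forces $\Je_\eps=-m_\eps(c_\eps)\nabla\mu_\eps\to 0$ in $L^2$, which is all that is needed there.

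For the phase field I would exploit the Modica--Mortola structure. Young's inequality bounds the energy density below by $\sqrt{a(c_\eps)}\,|\nabla c_\eps|\sqrt{2f(c_\eps)}=|\nabla\Psi(c_\eps)|$ with $\Psi(s)=\int_{-1}^s\sqrt{2a(\tau)f(\tau)}\,d\tau$, so $\Psi(c_\eps)$ is bounded in $BV(\Omega)$ uniformly in time; together with the $L^p$-control of $c_\eps$ furnished by $f''(c)\ge c_0|c|^{p-2}$ this gives spatial precompactness, while $\int_\Omega f(c_\eps)\,dx\le C\eps\to 0$ forces the a.e.\ limit to take values in $\{-1,1\}$, producing the set $E$ and $c=-1+2\chi_E$. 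To upgrade to the $C^{1/9}_{loc}([0,\infty);L^2(\Omega))$-convergence of \eqref{eq:Conv3} I would estimate $\partial_t c_\eps$ from \eqref{eq:NSCH3}: writing the convective term as $\Div(\ve_\eps c_\eps)$ and using $\Div\ve_\eps=0$, the hypothesis $p\ge 3$ is precisely what bounds $\ve_\eps\cdot\nabla c_\eps$ in $L^2((0,T);H^{-1}(\Omega))$, while $\Div(m_\eps\nabla\mu_\eps)$ is controlled by the flux bound; an Aubin--Lions and Arzel\`a--Ascoli argument with interpolation then gives the asserted H\"older convergence, and in particular $\chi_E\in C([0,\infty);L^1(\Omega))$. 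Testing \eqref{eq:weakNSCH2} with spatial constants yields conservation of $\int_\Omega c_\eps\,dx$, hence $|E_t|=|E_0|$. The strong convergence \eqref{eq:Conv2b} follows analogously, estimating $\partial_t(\rho(c_\eps)\ve_\eps)$ in a negative-order space from \eqref{eq:weakNSCH1} and combining the Aubin--Lions argument with the a.e.\ convergence of $\rho(c_\eps)$.

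Since $|\eps\,a(c_\eps)\,\partial_ic_\eps\,\partial_jc_\eps|\le\eps\,a(c_\eps)|\nabla c_\eps|^2\le 2e_\eps(c_\eps)$, the energy density and the stress components are uniformly bounded in $\M(\ol\Omega\times[0,T])$, so along a further subsequence we obtain the measures $\RM,\RM_{ij}$ of \eqref{eq:RM1}--\eqref{eq:RM2}, with $\RM_{ij}$ absolutely continuous with respect to $\RM$. In \eqref{eq:weakNSCH1} the terms $\rho(c_\eps)\ve_\eps$, $\ve_\eps\otimes\rho(c_\eps)\ve_\eps$ and $\nu(c_\eps)D\ve_\eps$ pass to the limit by the a.e.\ convergence of $c_\eps$, continuity of $\rho,\nu$ and the strong $L^2_{loc}$-convergence of $\ve_\eps$; the term $\tilde\Je_\eps$ converges to $\tilde\Je$ (to $0$ if $m_0\equiv 0$); and the capillary right-hand side converges to $\int_{\ol\Omega\times[0,T]}\nabla\phie:(d\RM_{ij})_{i,j=1}^d$. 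Because $\phie$ is divergence free the isotropic contribution $\int_{\ol\Omega\times[0,T]}\nabla\phie: I\,d\RM=0$ may be added freely, which both defines the functional $\delta V^t$ through \eqref{eq:ReprFirstVar} and produces \eqref{eq:ns'}; passing to the limit in \eqref{eq:weakNSCH2}, using $c_\eps\ve_\eps\to c\ve$ in $L^1_{loc}$, yields \eqref{eq:mu'}.

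The main obstacle is then to realize $\delta V^t$ as the first variation of a genuine varifold with the representation \eqref{eqn:V} and to establish \eqref{eq:fvar}. Following Chen~\cite{ChenLimitCH}, the key is to control the discrepancy measure associated with $\eps\frac{|\nabla A(c_\eps)|^2}{2}-\frac{f(c_\eps)}{\eps}$ and to show that its limit charges no mass transverse to the interface; this yields $\operatorname{tr}M\le 1$ for the symmetric positive-semidefinite matrix $M=\frac{d\RM_{ij}}{d\RM}$, so that the spectral decomposition $I-M=\sum_ib_i(I-\pe_i\otimes\pe_i)$ with $\pe_i$ the eigenvectors of $M$ produces weights $b_i\in[0,1]$ with $\sum_ib_i\ge 1$ and directions satisfying $\sum_i\pe_i\otimes\pe_i=I$, exactly as in Definition~\ref{defn:VarifoldSolution}, where $\lambda^t$ is the time-disintegration of $\RM$. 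The lower Modica--Mortola bound $\RM\ge 2\STC|\nabla\chi_E|$ furnishes $|\nabla\chi_{E_t}|/\lambda^t\le\frac1{2\STC}$, and testing the representation \eqref{eq:weakNSCH3} of $\mu_\eps$ against $\etae$ and again invoking equipartition gives \eqref{eq:fvar} when $m_0>0$. Finally, for well-prepared data I would combine weak-$\ast$ lower semicontinuity of the energy measure with the hypotheses $\ve_{0,\eps_k}\to\ve_0$ in $L^2_\sigma(\Omega)$ and $\E_\eps(c_{0,\eps})\to 2\STC|\nabla\chi_{E_0}|(\Omega)$, which rule out initial energy loss, to pass \eqref{eq:enin'} to the limit and obtain \eqref{eq:EnergyEstim}; throughout, the delicate point is that the stress measures $\RM_{ij}$ faithfully encode the interfacial mean curvature, which is exactly what the discrepancy estimate guarantees.
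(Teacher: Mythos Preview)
Your overall strategy is correct and closely parallels the paper's: uniform energy bounds, Modica--Mortola compactness for $c_\eps$, H\"older-in-time via the $H^{-1}$ bound on $\partial_t c_\eps$ (Lemma~\ref{lem:ctest}), Aubin--Lions for strong convergence of $\ve_\eps$ (Lemma~\ref{lem:vest}), weak-$\ast$ compactness for the energy and stress measures, and Chen's discrepancy estimate to build the varifold. Most details you sketch match Lemmas~\ref{lem:ctest}--\ref{lem:vest} and the concluding argument.

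There is, however, a genuine gap in your treatment of the case $m_0\equiv 0$. You write that $\Je_\eps\to 0$ in $L^2$ ``is all that is needed there,'' but it is not: the discrepancy estimate inherited from Chen has the form
\[
\int_0^T\!\!\int_\Omega(\xi^\eps(c_\eps))^+\,d(x,t)\le \eta\int_0^T\!\!\int_\Omega e_\eps(c_\eps)\,d(x,t)+\eps\,C(\eta)\int_0^T\!\!\int_\Omega|\mu_\eps|^2\,d(x,t),
\]
so you need an $L^2$-bound on $\mu_\eps$ itself, not merely on $\sqrt{m_\eps(c_\eps)}\,\nabla\mu_\eps$. The paper handles this in Lemma~\ref{lem:muest}: testing the elliptic relation~\eqref{eqn:elliptic} with $\etae\cdot\nabla c_\eps$ controls the spatial mean of $\mu_\eps$ by $\E_\eps(c_\eps)+\|\nabla\mu_\eps\|_{L^2}$, which yields $\overline m_\eps\int_0^T\|\mu_\eps(t)\|_{L^2}^2\,dt\le C(R,T)$. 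The discrepancy term then becomes $O(\eps/\overline m_\eps)$, and it is precisely the hypothesis $\eps/\overline m_\eps\to 0$ from Assumption~\ref{assumptions} (the ``slower than linear'' mobility scaling) that makes it vanish. Without this step your control of the discrepancy---and hence the inequality $\sum_i\gamma_i\le 1$ underpinning the varifold weights $b_i^t\in[0,1]$---does not follow. This is not a detail absorbable into ``following Chen'': it is the single place where the scaling hypothesis on $m_\eps$ enters the proof, and it is exactly what separates the present convergence result from the nonconvergence examples of Section~\ref{sec:NonConvergence}.
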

   By \eqref{eq:enin'} and the assumptions on the initial data we obtain
\begin{eqnarray}\nonumber
  \lefteqn{\int_\Omega\frac{\rho(c_\eps(t))|\ve_\eps(t)|^2}2\,dx +\E_\eps(c_{\eps}(t))}\\\label{eq:enin}
&&  +\int_0^t\int_\Omega \nu(c_\eps) |D\ve_\eps|^2 +m_\eps(c_\eps) |\nabla
    \mu_\eps|^2\, dx\sd t \leq R 
\end{eqnarray}
for all $t\ge 0$. 

From this estimate, Korn's inequality, and \eqref{eq:EstimIV} we deduce that there exists a sequence $\eps_k\searrow 0$ as $k\to\infty$ such that \eqref{eq:Conv2a},\eqref{eq:Conv2c},\eqref{eq:Conv1},\eqref{eq:RM1}, and \eqref{eq:RM2} hold. Using the assumptions on $f$, we further deduce that
\begin{align}\label{eq:cest}
  \int_\Omega |c_\eps(t)|^p \sd x &\leq C(1+R),\\ \label{eq:cest2}
\int_\Omega (|c_\eps(t)|-1)^2\, dx &\leq C\eps R 
\end{align}
for all $t\ge 0$. In particular, for \eqref{eq:cest2} we used that $f(c)\ge C (|c|-1)^2$ for all $c\in\R$ and some constant $C>0$ which follows from the positivity of $f''(\pm 1)$ and the $p$-growth of $f$ for large $c$. With the definitions (cf. \cite{ChenLimitCH})
\[W(c)=\int_{-1}^c\sqrt{2\tilde{f}(s)}\sd s,\ \text{ where } \tilde{f}(s)= \min (f(s),1+|s|^2),\]
and
\[w_\eps (x,t) = W(c_\eps(x,t)),\] 
the functions $w_\eps$ are uniformly bounded in
$L^\infty((0,\infty);BV(\Omega))$ since
\begin{equation}\label{eq:NablaWepsEstim}
  \int_\Omega |\nabla w_\eps(x,t)|\sd x = \int_\Omega
  \sqrt{2\tilde{f}(c_\eps(x,t))}|\nabla c_\eps(x,t)|\sd x \leq \int_\Omega e_\eps
  (c_\eps(x,t))\sd x \leq R.
\end{equation}
Moreover, note that by the assumptions on $f$, there exist constants $C_0,C_1>0$ such that for all $c_0,c_1\in\R$
\begin{equation}\label{eq:WEstim}
  C_0|c_0-c_1|^2 \leq |W(c_0)-W(c_1)|\leq C_1 |c_0-c_1| (1+|c_0|+|c_1|).
\end{equation}
Here, for the first inequality we used again that $f(s)\ge C (|s|-1)^2$ for all $s\in\R$.

\begin{lem}\label{lem:ctest}
  There exists a constant $C>0$ such that
  \begin{equation*}
    \|w_\eps\|_{C^{\frac18}([0,\infty); L^1(\Omega))}+
    \|c_\eps\|_{C^{\frac18}([0,\infty); L^2(\Omega))} \leq C.
  \end{equation*}
\end{lem}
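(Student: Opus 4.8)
The plan is to deduce both bounds from a single weak time–regularity estimate for $c_\eps$, combined with the uniform spatial bound \eqref{eq:NablaWepsEstim} for $w_\eps$, shuttled back and forth through the two–sided inequality \eqref{eq:WEstim}.

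First I would put the phase equation in divergence form. Since $\Div\ve_\eps=0$, \eqref{eq:weakNSCH2} encodes $\partial_t c_\eps=\Div\vc F_\eps$ with $\vc F_\eps:=m_\eps(c_\eps)\nabla\mu_\eps-c_\eps\ve_\eps$, and I claim $\vc F_\eps$ is bounded in $L^2((0,\infty);L^2(\Omega)^d)$ uniformly in $\eps$. Indeed $\|m_\eps(c_\eps)\nabla\mu_\eps\|_{L^2(Q)}^2\le C_0\int_Q m_\eps(c_\eps)|\nabla\mu_\eps|^2\le C_0R$ by \eqref{eq:enin}, while for the convective part I use $\ve_\eps\in L^2((0,\infty);H^1)\cap L^\infty((0,\infty);L^2)\hookrightarrow L^2((0,\infty);L^6)$ (here $d\le 3$) together with $c_\eps\in L^\infty((0,\infty);L^p)$ from \eqref{eq:cest}, so that by Hölder ($\tfrac13+\tfrac16=\tfrac12$) one gets $\|c_\eps\ve_\eps\|_{L^2(Q)}\le\|c_\eps\|_{L^\infty(L^3)}\|\ve_\eps\|_{L^2(L^6)}\le C$. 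This is exactly where $p\ge3$ enters. Testing $\partial_t c_\eps=\Div\vc F_\eps$ against $\phi\in H^1(\Omega)$ and integrating in time gives, for all $0\le s\le t$, the identity $(c_\eps(t)-c_\eps(s),\phi)=-\int_s^t\!\int_\Omega\vc F_\eps\cdot\nabla\phi$, hence $\|c_\eps(t)-c_\eps(s)\|_{H^{-1}}\le C|t-s|^{1/2}$ uniformly in $\eps$.

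Next I reduce both claims to one $L^1$–modulus for $w_\eps$. The lower inequality in \eqref{eq:WEstim} gives $C_0\|c_\eps(t)-c_\eps(s)\|_{L^2}^2\le\int_\Omega|W(c_\eps(t))-W(c_\eps(s))|\,dx=\|w_\eps(t)-w_\eps(s)\|_{L^1}$, so an $L^1$–Hölder bound for $w_\eps$ with exponent $\alpha$ yields an $L^2$–Hölder bound for $c_\eps$ with exponent $\alpha/2$ (the upper inequality, with $c_\eps\in L^\infty(L^p)$, gives the converse control $\|w_\eps(t)-w_\eps(s)\|_{L^1}\le C\|c_\eps(t)-c_\eps(s)\|_{L^{p'}}$). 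Thus it suffices to prove $\|w_\eps(t)-w_\eps(s)\|_{L^1}\le C|t-s|^{1/4}$, which gives $w_\eps\in C^{1/4}\subset C^{1/8}$ and $c_\eps\in C^{1/8}$. This $L^1$–modulus I would obtain by interpolating the uniform bound $\|w_\eps(t)\|_{BV}\le C$ against the weak modulus above: mollifying $g:=w_\eps(t)-w_\eps(s)$ at scale $\delta$ (after the usual localisation near $\partial\Omega$) yields $\|g\|_{L^1}\le C\delta\|g\|_{BV}+\|g\ast\eta_\delta\|_{L^1}$, and by duality $\|g\ast\eta_\delta\|_{L^1}=\sup_{\|\phi\|_\infty\le1}\int_\Omega g\,(\phi\ast\eta_\delta)$, so the task becomes to bound $\int_\Omega(w_\eps(t)-w_\eps(s))\,\psi$ for smooth $\psi$ with $\|\psi\|_\infty\le1$, $\|\nabla\psi\|_\infty\le C\delta^{-1}$ by $C\delta^{-1}|t-s|^{1/2}$; optimising $\delta\sim|t-s|^{1/2}$ then produces the exponent $1/4$.

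The main obstacle is precisely this last weak estimate for the \emph{nonlinear} quantity $w_\eps=W(c_\eps)$. Writing $\int_\Omega(w_\eps(t)-w_\eps(s))\psi=\int_s^t\!\int_\Omega W'(c_\eps)\,\psi\,\partial_\tau c_\eps$ and inserting $\partial_\tau c_\eps=\Div\vc F_\eps$, the derivative falling on $\psi$ and the convective contributions are harmless, since they are of divergence type: the term with $\nabla\psi$ directly, and the term $\psi\,c_\eps W''(c_\eps)\nabla c_\eps\cdot\ve_\eps=\psi\,\ve_\eps\cdot\nabla P(c_\eps)$ (with $P'(s)=sW''(s)$) after integrating by parts onto $\psi$, both controlled via $p\ge3$ and $\ve_\eps\in L^2(L^6)$. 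The dangerous term is $\int_s^t\!\int_\Omega \psi\,m_\eps(c_\eps)\nabla\mu_\eps\cdot\nabla\big(W'(c_\eps)\big)$, in which $\nabla(W'(c_\eps))=W''(c_\eps)\nabla c_\eps$ is only of size $O(\eps^{-1/2})$ in $L^2$ because the energy gives merely $\eps\|\nabla c_\eps\|_{L^2}^2\le C$; a direct bound is therefore not uniform in $\eps$, and integrating by parts simply returns the same term. Controlling this chemical–potential contribution uniformly is the heart of the matter, and here I would follow and adapt the argument of Chen~\cite{ChenLimitCH}, exploiting the finer energy and dissipation structure rather than the crude $L^2$ bound on $\nabla c_\eps$. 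Once this estimate is secured the chain above closes and delivers the stated $C^{1/8}$ bounds, uniformly in $\eps\in(0,1]$.
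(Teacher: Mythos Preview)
Your proposal has a genuine gap, and the deferral to ``Chen's argument'' at the crucial step is misdirected: neither Chen nor the present paper ever attempts to estimate the nonlinear time derivative $\partial_t w_\eps=W'(c_\eps)\partial_t c_\eps$, and there is no hidden structural cancellation in the term $\int\psi\,m_\eps(c_\eps)\nabla\mu_\eps\cdot W''(c_\eps)\nabla c_\eps$ that would save you. The only available bounds give $\sqrt{m_\eps}\,\nabla\mu_\eps\in L^2$ and $\sqrt{\eps}\,\nabla c_\eps\in L^2$, and without a relation like $m_\eps\lesssim\eps$ (which is explicitly excluded here) this product is not uniformly controlled.

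The paper's argument avoids the nonlinearity in the time variable entirely by mollifying $c_\eps$ (not $w_\eps$) at scale $\eta$ and testing the \emph{linear} weak formulation \eqref{eq:weakNSCH2} with the smooth function $c_\eps^\eta(t)-c_\eps^\eta(\tau)$, which gives directly
\[
\int_\Omega\big(c_\eps(t)-c_\eps(\tau)\big)\big(c_\eps^\eta(t)-c_\eps^\eta(\tau)\big)\,dx\le C\eta^{-1}|t-\tau|^{1/2}.
\]
The nonlinearity enters only on the spatial side, through the key approximation estimate
\[
\|c_\eps(t)-c_\eps^\eta(t)\|_{L^2(\Omega)}^2\le C\eta\,\|\nabla w_\eps(t)\|_{L^1(\Omega)}\le C'\eta,
\]
which transfers the uniform $BV$ bound on $w_\eps$ into $L^2$ spatial regularity for $c_\eps$ (this is precisely the $L^\infty(B^{1/3}_{2\infty})$ bound alluded to in the remark following the lemma). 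Combining the two displays via Cauchy--Schwarz and optimising $\eta=|t-\tau|^{1/4}$ yields $\|c_\eps(t)-c_\eps(\tau)\|_{L^2}^2\le C|t-\tau|^{1/4}$, i.e.\ $c_\eps\in C^{1/8}(L^2)$. Only afterwards is the $C^{1/8}(L^1)$ bound for $w_\eps$ deduced, via the \emph{upper} inequality in \eqref{eq:WEstim} and the uniform $L^p$ bound \eqref{eq:cest}: the implication runs from $c_\eps$ to $w_\eps$, opposite to your reduction.
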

\begin{proof}
  The proof is a modification of \cite[Proof of Lemma~3.2]{ChenLimitCH}. Therefore, we only
  give a brief presentation. For sufficiently small $\eta>0$, $ x\in\Omega$, and $t\geq 0$ let
  \begin{equation*}
    c_\eps^\eta (x,t) = \int_{B_1} \omega(y)\, c_\eps (x-\eta y,t) \sd y,
  \end{equation*}
  where $\omega$ is a standard mollifying kernel and $c_\eps$ is extended to a small
  neighborhood of $\Omega$ as in \cite[Proof of Lemma~3.2]{ChenLimitCH}. Then, there exist constants $C,C'>0$ such that
  \begin{align}\label{eq:gradest}
    \|\nabla c^\eta_\eps(t)\|_{L^2(\Omega)} &\leq
    C\eta^{-1}\|c_\eps(t)\|_{L^2(\Omega)}\leq C'\eta^{-1}\\\label{eq:diffest}
    \|c^\eta_\eps(t) - c_\eps(t)\|_{L^2(\Omega)}^2&\leq C\eta \|\nabla
    w_\eps(t)\|_{L^1(\Omega)}\leq C'\eta 
  \end{align}
for all sufficiently small $\eta>0$, cf. \cite[Proof of Lemma~3.2]{ChenLimitCH}.
From \eqref{eq:gradest} and \eqref{eq:weakNSCH2} we deduce that for all $0\le\tau< t<\infty$ such that $|t-\tau|\le 1$
\begin{align}\nonumber
  &\int_\Omega (c_\eps(x,t)- c_\eps
  (x,\tau))(c_\eps^\eta(x,t)- c_\eps^\eta (x,\tau)) \sd x=\\\nonumber
&-\int_\tau^t\int_\Omega \Big({m_\eps(c_\eps(x,s))} \nabla\mu_\eps(x,s)- \ve_\eps(x,s)c_\eps(x,s)\Big)\cdot\Big(\nabla c_\eps^\eta(x,t)- \nabla c_\eps^\eta (x,\tau)\Big)
  \, d(x,s)\\\label{eq:timeest}
& \leq C(R)(t-\tau)^{\frac12} \sup_{s\in (\tau,t)}\|\nabla c_\eps^\eta
(s)\|_{L^2(\Omega)} \leq C(R)\eta^{-1}(t-\tau)^{\frac12}.
\end{align}
Here, we used the fact that for all $\tau,t$ as above we have
\begin{equation*}
  \big\|{m_\eps(c_\eps)}\nabla\mu_\eps- \ve_\eps c_\eps\big\|_{L^2(\Omega\times (\tau,t))} \leq C(R),
\end{equation*}
since the sequences $(\ve_\eps) \subset L^2((0,\infty);L^6(\Omega))$ and $(c_\eps) \subset
L^\infty((0,\infty);L^3(\Omega))$ are bounded due to the assumptions $d\leq 3$ and $p\geq
3$. Now, combining \eqref{eq:timeest}, \eqref{eq:diffest} and using H\"older's and Young's inequality we conclude that for $\eta,\tau,$ and $t$ as above we have
\begin{equation*}
  \|c_\eps (t)-c_\eps(\tau)\|_{L^2(\Omega)}^2 \leq C(\eta+\eta^{-1}|t-\tau|^{\frac12}).
\end{equation*}
Choosing $\eta=(t-\tau)^\frac14$ for sufficiently small $t-\tau$ we conclude the claim concerning $c_\eps$. Using (\ref{eq:WEstim}), one derives the claim concerning $w_\eps$
as in \cite{ChenLimitCH}.
\end{proof}

\begin{rem}
It is possible to understand the proof of Lemma \ref{lem:ctest} from a more general point of view. From \eqref{eq:weakNSCH2} and \eqref{eq:enin} we easily deduce that the distributional time-derivative of $(c_\eps)$ is uniformly bounded in $L^2((0,\infty),H^{-1}(\Omega))$. In particular, $(c_\eps)$ is uniformly bounded in $C^{1/2}([0,\infty),H^{-1}(\Omega))$. On the other hand, the computations leading to \eqref{eq:diffest} show that $(c_\eps)$ is uniformly bounded in $L^\infty((0,\infty);B^{1/3}_{2\infty}(\Omega))$. This follows from $B^{1/3}_{2\infty}(\Omega)= (L^2(\Omega),H^1(\Omega))_{1/3,\infty}$ and the definition of the real interpolation spaces with the aid of the $K$-method. By interpolation, we obtain uniform boundedness in $C^{1/8}([0,\infty),L^2(\Omega))$.
\end{rem}

The proof of the following lemma is literally the same as the proof of \cite[Lemma~3.3]{ChenLimitCH}.
\begin{lem}\label{lem:cconv}
  There exists a subsequence (again denoted by $\eps_k$) and a measurable set $E\subset
  \Omega\times [0,\infty)$ such that, as $k\to\infty$,
  \begin{align*}
   w_{\eps_k}\to 2 \STC \chi_E&\quad\text{ a.e. in }\Omega\times
    (0,\infty)\text{ and in }C^{\frac19}_{loc}([0,\infty);L^1(\Omega))\\ 
  c_{\eps_k}\to -1+2\chi_E&\quad\text{ a.e. in }\Omega\times
    (0,\infty)\text{ and in }C^{\frac19}_{loc}([0,\infty);L^2(\Omega))
  \end{align*}
Moreover, $\chi_E\in L^\infty_{\omega\ast}((0,\infty);BV(\Omega))\cap C^{\frac14}([0,\infty);L^1(\Omega))$ and for all $t\ge 0$ we have $|E_t|= |E_0|= \frac{1+\bar{c}}2|\Omega|$.
\end{lem}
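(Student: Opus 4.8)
The plan is to obtain strong compactness by an Arzelà--Ascoli argument that decouples space and time, applied first to the tamed quantity $w_\eps=W(c_\eps)$ and only afterwards transferred to $c_\eps$ through the two-sided estimate \eqref{eq:WEstim}. The reason one must route through $w_\eps$ is that $c_\eps$ carries no uniform $BV(\Omega)$-bound --- the steep interfacial transition makes $\nabla c_\eps$ large --- whereas the degenerate weight $\sqrt{2\tilde f}$ vanishing at $\pm1$ renders $w_\eps$ uniformly bounded in $L^\infty((0,\infty);BV(\Omega))$ by \eqref{eq:NablaWepsEstim}. Reconciling this spatial compactness (available only for $w_\eps$) with the temporal equicontinuity (available for both $w_\eps$ and $c_\eps$ from Lemma \ref{lem:ctest}) while keeping the Hölder exponent from degrading below $1/9$ is the technical heart of the argument.

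\emph{Compactness of $w_\eps$.} For each fixed $t$ the set $\{w_\eps(t)\}$ is bounded in $BV(\Omega)$, hence relatively compact in $L^1(\Omega)$ by Rellich's theorem; by Lemma \ref{lem:ctest} the family $(w_\eps)$ is uniformly bounded in $C^{1/8}([0,\infty);L^1(\Omega))$, so it is equicontinuous in time. A vector-valued Arzelà--Ascoli argument on each interval $[0,T]$ yields a subsequence $(\eps_k)$ and a limit $w$ with $w_{\eps_k}\to w$ in $C^0([0,T];L^1(\Omega))$; interpolating the uniform $C^{1/8}$-seminorm bound against this $C^0$-convergence upgrades it to convergence in $C^{1/9}_{loc}([0,\infty);L^1(\Omega))$, and passing to a further subsequence also gives a.e.\ convergence on $\Omega\times(0,\infty)$.

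\emph{Transfer to $c_\eps$ and identification of the limit.} The lower bound $C_0|c_0-c_1|^2\le|W(c_0)-W(c_1)|$ in \eqref{eq:WEstim} gives $|c_{\eps_k}-c_{\eps_l}|^2\le C_0^{-1}|w_{\eps_k}-w_{\eps_l}|$ pointwise, so the a.e.\ convergence of $w_{\eps_k}$ forces $(c_{\eps_k})$ to be a.e.\ Cauchy; denote the limit $c$, so that $w=W(c)$ by continuity of $W$. Estimate \eqref{eq:cest2} yields $\int_\Omega(|c_\eps|-1)^2\,dx\to0$, hence $|c|=1$ a.e.\ and $c=-1+2\chi_E$ for a measurable set $E$; since $W(-1)=0$ and, as $\tilde f=f$ on $[-1,1]$, $W(1)=\int_{-1}^1\sqrt{2\tilde f(s)}\,ds=2\STC$, we conclude $w=2\STC\chi_E$. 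For the $L^2$-statement the same bound gives $\|c_{\eps_k}(t)-c(t)\|_{L^2(\Omega)}^2\le C_0^{-1}\|w_{\eps_k}(t)-w(t)\|_{L^1(\Omega)}\to0$ for each $t$; together with the uniform $C^{1/8}([0,\infty);L^2(\Omega))$ bound of Lemma \ref{lem:ctest}, equicontinuity upgrades this pointwise-in-time convergence to uniform $C^0$-convergence on compact intervals, and a further interpolation delivers $C^{1/9}_{loc}([0,\infty);L^2(\Omega))$.

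\emph{Regularity and mass.} Weak-$\ast$ lower semicontinuity, using the weak-$\ast$ precompactness of bounded sets in $\Linftyw((0,T);BV(\Omega))$ recalled in Section \ref{sec:Prelim}, shows $w\in\Linftyw((0,\infty);BV(\Omega))$ and hence $\chi_E\in\Linftyw((0,\infty);BV(\Omega))$. For the sharper time-regularity the two-valued limit satisfies $|c(t,x)-c(\tau,x)|^2=2|c(t,x)-c(\tau,x)|$ pointwise, so $\|c(t)-c(\tau)\|_{L^1(\Omega)}=\tfrac12\|c(t)-c(\tau)\|_{L^2(\Omega)}^2\le C|t-\tau|^{1/4}$ from the inherited $C^{1/8}([0,\infty);L^2(\Omega))$ bound, giving $\chi_E\in C^{1/4}([0,\infty);L^1(\Omega))$. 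Finally, testing \eqref{eq:weakNSCH2} with a spatially constant $\psi$ and using $\Div\ve_\eps=0$ shows $\int_\Omega c_\eps(t)\,dx=\int_\Omega c_{0,\eps}\,dx=\bar c\,|\Omega|$ for all $t$; passing to the limit with the pointwise-in-time $L^2$-convergence and writing $c(t)=-1+2\chi_{E_t}$ gives $|E_t|=\tfrac{1+\bar c}2|\Omega|$ for every $t\ge0$, in particular $|E_t|=|E_0|$.
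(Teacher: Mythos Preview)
Your proposal is correct and follows essentially the same route as the paper, which simply defers to \cite[Lemma~3.3]{ChenLimitCH}: Arzel\`a--Ascoli on $w_\eps$ via the $BV$-compactness \eqref{eq:NablaWepsEstim} and the $C^{1/8}$-equicontinuity of Lemma~\ref{lem:ctest}, transfer to $c_\eps$ through \eqref{eq:WEstim}, identification of the limit via \eqref{eq:cest2}, and recovery of $C^{1/4}$ time-regularity in $L^1$ from the two-valuedness of the limit combined with the inherited $C^{1/8}$ bound in $L^2$.
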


\begin{lem}\label{lem:muest}
There exist constants $C,\eps_0>0$ such that
\begin{equation}
  \label{eq:H1ChemPotEstim}
  \|\mu_\eps (t)\|_{H^1(\Omega)} \leq C \left(\E_\eps(c_\eps(t)) + \|\nabla \mu_\eps(t)\|_{L^2(\Omega)}\right)
\end{equation}
for almost all $t>0$ and $0<\eps\leq \eps_0$. Using $m_\eps\ge \overline m_\eps$ we deduce from \eqref{eq:H1ChemPotEstim} and \eqref{eq:enin} that
\begin{equation}
  \label{eq:L2ChemPotEstim}
  \overline m_\eps\int_0^T\|\mu_\eps (t)\|_{L^2(\Omega)}^2\, dt \leq C(R,T)\quad\text{ for all } 0<T<\infty.
\end{equation}
\end{lem}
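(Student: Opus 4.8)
The plan is to reduce the full $H^1$-bound to a bound on the spatial mean of $\mu_\eps$ and to extract that mean from the defining relation \eqref{eq:weakNSCH3}. Set $\overline{\mu}_\eps(t):=\frac1{|\Omega|}\int_\Omega\mu_\eps(t)\,dx$. By the Poincar\'e--Wirtinger inequality one has $\|\mu_\eps(t)\|_{H^1(\Omega)}\le C\big(\|\nabla\mu_\eps(t)\|_{L^2(\Omega)}+|\overline{\mu}_\eps(t)|\big)$, so it suffices to bound $|\overline{\mu}_\eps(t)|$ by the right-hand side of \eqref{eq:H1ChemPotEstim}. Integrating \eqref{eq:weakNSCH3} over $\Omega$ and using the no-flux condition $\no_{\partial\Omega}\cdot\nabla c_\eps=0$ to kill the divergence term, I obtain $|\Omega|\,\overline{\mu}_\eps=\frac1\eps\int_\Omega f'(c_\eps)\,dx+\eps\int_\Omega a'(c_\eps)\frac{|\nabla c_\eps|^2}2\,dx$. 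The last integral is harmless: since $a\ge c_0>0$ it equals $\int_\Omega\frac{a'(c_\eps)}{a(c_\eps)}\,\eps\frac{a(c_\eps)|\nabla c_\eps|^2}2\,dx\le C\,\E_\eps(c_\eps)$ (and it vanishes altogether for the standard choice $a\equiv1$). Everything therefore reduces to estimating $\frac1\eps\int_\Omega f'(c_\eps)\,dx$.

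For this I would decompose $\Omega$ according to the distance of $c_\eps$ from the wells $\pm1$. Where $|c_\eps|\ge M$ is large, the $p$-growth of $f$ gives $|f'(c_\eps)|\le C f(c_\eps)$, so this part contributes at most $\frac C\eps\int_\Omega f(c_\eps)\,dx\le C\,\E_\eps$. On the bounded intermediate set $\{|c_\eps\mp1|\ge\delta$ for both signs, $|c_\eps|\le M\}$ one has $|f'|\le C_M$, while \eqref{eq:cest2} forces this set to have measure at most $\frac{C}{\delta^2}\int_\Omega(|c_\eps|-1)^2\,dx\le C\eps R/\delta^2$; hence its contribution is again $O(\E_\eps)$. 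The delicate part is the near-well region $\{|c_\eps\mp1|<\delta\}$, which carries almost all of $\Omega$. There $f'(c_\eps)=f''(\pm1)(c_\eps\mp1)+O((c_\eps\mp1)^2)$, and since $f(c)\ge C(|c|-1)^2$ the quadratic remainder is absorbed into $\frac1\eps\int f(c_\eps)\le\E_\eps$, so only the linear deviations $\frac1\eps\int_{\{c_\eps\approx\pm1\}}(c_\eps\mp1)\,dx$ remain. These are the crux, and the main obstacle of the lemma: they cannot be controlled by the energy alone, since the only a priori information \eqref{eq:cest2} bounds them merely by the useless order $\eps^{-1/2}$, and indeed $|\overline{\mu}_\eps|$ can genuinely be of order $\eps^{-1/2}$. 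The point is that such a large near-well deviation is necessarily accompanied by a correspondingly large oscillation of $\mu_\eps$ across the diffuse interface — because $f''$ changes sign between the two wells, a nearly constant large value of $\frac1\eps f'(c_\eps)$ in the bulk cannot persist through the transition layer — and this oscillation must be charged to $\|\nabla\mu_\eps\|_{L^2}$. Making this precise is exactly the step taken over from \cite{ChenLimitCH}: rewriting $\frac1\eps f'(c_\eps)=\mu_\eps-\eps a'(c_\eps)\frac{|\nabla c_\eps|^2}2+\eps\Div(a(c_\eps)\nabla c_\eps)$ on the near-well region, choosing the cut-off level $\delta$ via the coarea formula so that the flux of $a(c_\eps)\nabla c_\eps$ across $\{|c_\eps\mp1|=\delta\}$ is dominated by $\E_\eps$, and estimating the remaining average of $\mu_\eps$ against $\overline{\mu}_\eps$ through Poincar\'e's inequality. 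I expect this localisation/compensation argument, ensuring that the near-well contribution is genuinely absorbed into $\|\nabla\mu_\eps\|_{L^2}$ and does not leave behind a negative power of $\eps$, to be the hardest part.

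Granting \eqref{eq:H1ChemPotEstim}, the estimate \eqref{eq:L2ChemPotEstim} follows by squaring: $\|\mu_\eps(t)\|_{L^2}^2\le\|\mu_\eps(t)\|_{H^1}^2\le 2C^2\big(\E_\eps(c_\eps(t))^2+\|\nabla\mu_\eps(t)\|_{L^2}^2\big)$. Multiplying by $\overline{m}_\eps$ and integrating over $(0,T)$, the first term is bounded by $2C^2\overline{m}_\eps R^2T\le 2C^2C_0R^2T$ since $\E_\eps(c_\eps(t))\le R$ by \eqref{eq:enin} and $\overline{m}_\eps\le C_0$, and the second by $2C^2\int_0^T\!\int_\Omega m_\eps(c_\eps)|\nabla\mu_\eps|^2\,dx\,dt\le 2C^2R$ because $\overline{m}_\eps\le m_\eps(c_\eps)$ and again by \eqref{eq:enin}. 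This yields \eqref{eq:L2ChemPotEstim} with $C(R,T)=C(R^2T+R)$.
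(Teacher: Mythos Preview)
Your reduction to bounding the spatial mean $\overline{\mu}_\eps$ via Poincar\'e--Wirtinger is correct, and your derivation of \eqref{eq:L2ChemPotEstim} from \eqref{eq:H1ChemPotEstim} at the end is fine. The gap is in the heart of the argument: the route through $\frac1\eps\int_\Omega f'(c_\eps)\,dx$ does not close. Your own computation shows that after the localisation step you end up with
\[
|\Omega|\,\overline{\mu}_\eps \;=\; \overline{\mu}_\eps\,\big|\{|c_\eps-1|<\delta\}\cup\{|c_\eps+1|<\delta\}\big| \;+\; O\big(\|\nabla\mu_\eps\|_{L^2}+\E_\eps\big),
\]
so that $\overline{\mu}_\eps$ is multiplied only by the measure of the \emph{transition} region $|\Omega|-|\{\text{near wells}\}|$, which is $O(\eps)$ by \eqref{eq:cest2}. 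You therefore recover only $|\overline{\mu}_\eps|\le C\eps^{-1}(\E_\eps+\|\nabla\mu_\eps\|_{L^2})$, which is no better than the crude $\eps^{-1/2}$ bound you already flagged. The ``flux/coarea'' patch is circular: rewriting $\eps^{-1}f'(c_\eps)$ as $\mu_\eps+\cdots$ on a set of almost full measure simply feeds $\overline{\mu}_\eps$ back into itself.

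The paper (following \cite[Lemma~3.4]{ChenLimitCH}) does \emph{not} integrate the equation over $\Omega$. Instead it tests \eqref{eqn:elliptic} against $\etae\cdot\nabla c_\eps$ for a vector field $\etae\in C^1(\ol\Omega;\R^d)$, which after an integration by parts yields the first--variation identity \eqref{eq:fvare}: the right-hand side is manifestly bounded by $C\,\E_\eps(c_\eps)$ (plus a boundary term handled as in Chen). On the left one writes $\mu_\eps=\overline{\mu}_\eps+(\mu_\eps-\overline{\mu}_\eps)$; after moving the gradient off $c_\eps$ by a further integration by parts, the oscillatory part contributes $O(\|\nabla\mu_\eps\|_{L^2})$, while the coefficient of $\overline{\mu}_\eps$ is $\int_\Omega\etae\cdot\nabla c_\eps\,dx=-\int_\Omega c_\eps\,\Div\etae\,dx+\int_{\partial\Omega}c_\eps\,\etae\cdot\no_{\partial\Omega}$. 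The decisive point is that, thanks to the mass constraint $\bar c\in(-1,1)$ and the uniform $BV$-compactness of $W(c_\eps)$, one can choose $\etae$ so that this coefficient is bounded away from zero uniformly for small $\eps$; this is where the smallness of $\eps_0$ enters. That lower bound---not any cancellation of $f'$ near the wells---is the mechanism that isolates $\overline{\mu}_\eps$.
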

\begin{proof}
  Let us suppress the time variable. Due to Poincar\'e's inequality it suffices to control the average of $\mu_\eps$. Equation \eqref{eq:weakNSCH3} can be written in the form
\begin{equation}\label{eqn:elliptic}
 \mu_\eps=\frac{f'(c_\eps)}{\eps}-\eps\sqrt{a(c_\eps)}\,\Delta A(c_\eps).
\end{equation}
Multiplying by $\etae\cdot\nabla c_\eps$ for $\etae\in C^1(\ol\Omega;\R^d)$, integrating over $\Omega$, and integrating by parts yields
\begin{align}\nonumber
 \int_\Omega \etae\cdot\nabla c_\eps\,\mu_\eps\, dx=-\int_\Omega \nabla\etae:\big(e_\eps(c_\eps)\,I-\eps\,\nabla A(c_\eps)\otimes\nabla A(c_\eps)\big)\, dx\\\label{eq:fvare} +\int_{\partial\Omega} e_\eps(c_\eps)\,\etae\cdot\no_{\partial\Omega}\,d\HS^{d-1}.
\end{align}
Now we can proceed exactly as in the proof of \cite[Lemma~3.4]{ChenLimitCH}.
\end{proof}

\begin{lem}\label{lem:vest}
 There exists a subsequence (again denoted by $\eps_k$) such that, as $k\to\infty$,
\begin{alignat*}{2}
\ve_{\eps_k} &\to \ve &&\quad \text{ in } L^2_{loc}([0,\infty);L^2_\sigma(\Omega))\\
\ve_{\eps_k}(t) &\to \ve(t)&&\quad \text{ in } L^2(\Omega) \text{ for almost every } t> 0.
\end{alignat*}
Furthermore, there exists a measurable, non-increasing function $\E(t)$, $t>0$, such that for almost all $t>0$
\begin{align}\label{eqn:en}
\E_{\eps_k}(c_{\eps_k}(t))\to \E(t)\quad\text{ and }\quad|\nabla \chi_{E_t}|(\Omega) \leq \frac1{2\STC} \mathcal{E}(t)\leq\frac1{2\STC} R.
\end{align}
\end{lem}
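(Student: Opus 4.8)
The statement splits into two essentially independent parts: the strong $L^2$-compactness of the velocities, and the construction of the limiting energy function together with the perimeter bound. My plan is a density-weighted Aubin--Lions argument for the first, and Helly's selection theorem combined with the Modica--Mortola lower bound for the second. Spatial compactness of the velocities is immediate: by the energy bound \eqref{eq:enin} and Korn's inequality $(\ve_\eps)$ is bounded in $L^2((0,\infty);H^1(\Omega)^d)\cap L^\infty((0,\infty);L^2_\sigma(\Omega))$, and $H^1(\Omega)\hookrightarrow\hookrightarrow L^2(\Omega)$. The difficulty is time regularity, and because the density is variable it is cleanest to work with the solenoidal momentum $\mathbf m_\eps:=P_\sigma(\rho(c_\eps)\ve_\eps)$, where $P_\sigma$ is the Leray projection; this is bounded in $L^\infty((0,\infty);L^2_\sigma(\Omega))$ since $\rho$ is bounded. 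Reading \eqref{eq:weakNSCH1} as an identity for $\partial_t\mathbf m_\eps$ tested against $\phie\in C^\infty_{0,\sigma}(\Omega)$, I would bound each right-hand term: the viscous term $\nu(c_\eps)D\ve_\eps$ in $L^2((0,\infty);L^2)$; the inertial term $\ve_\eps\otimes\rho(c_\eps)\ve_\eps$ in $L^{5/3}(Q)$ via $\ve_\eps\in L^{10/3}(Q)$; the term $\ve_\eps\otimes\tilde\Je_\eps$ in $L^1((0,\infty);L^{3/2})$ using $\ve_\eps\in L^2(L^6)$ and $m_\eps\nabla\mu_\eps\in L^2(L^2)$; and the capillary stress $\eps\,a(c_\eps)\nabla c_\eps\otimes\nabla c_\eps$ only in $L^\infty((0,\infty);L^1(\Omega))$ by \eqref{eq:enin}. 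The last term is the bottleneck: being merely integrable in space it forces a test function in $W^{1,\infty}$, so altogether $\partial_t\mathbf m_\eps$ is bounded only in $L^1(0,T;(W^{2,q}_{0,\sigma}(\Omega))')$ for some $q>d$.

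Since $L^2_\sigma(\Omega)\hookrightarrow\hookrightarrow (H^1_{0,\sigma}(\Omega))'\hookrightarrow (W^{2,q}_{0,\sigma}(\Omega))'$, the Aubin--Lions--Simon lemma then gives, along a subsequence, $\mathbf m_{\eps_k}\to\mathbf m$ strongly in $L^2(0,T;(H^1_{0,\sigma}(\Omega))')$, and identifying the weak limit shows $\mathbf m=P_\sigma(\rho(c)\ve)$. Pairing this strong convergence against $\ve_{\eps_k}\rightharpoonup\ve$ in $L^2(0,T;H^1_{0,\sigma}(\Omega))$ yields $\int_0^T\!\int_\Omega\rho(c_{\eps_k})|\ve_{\eps_k}|^2\to\int_0^T\!\int_\Omega\rho(c)|\ve|^2$. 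Using $\rho\ge c_0>0$ together with $\rho(c_{\eps_k})\to\rho(c)$ a.e.\ and the weak convergence \eqref{eq:Conv2a}, the cross and quadratic terms in $\int\!\int\rho(c_{\eps_k})|\ve_{\eps_k}-\ve|^2$ also converge, so this quantity tends to $0$; hence $\ve_{\eps_k}\to\ve$ in $L^2_{loc}([0,\infty);L^2_\sigma(\Omega))$, and a further subsequence converges in $L^2(\Omega)$ for almost every $t$.

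For the energy function I would set $G_\eps(t):=\tfrac12\int_\Omega\rho(c_\eps(t))|\ve_\eps(t)|^2\,dx+\E_\eps(c_\eps(t))$. The energy inequality \eqref{eq:enin'} shows that $G_\eps$ has a non-increasing representative with $0\le G_\eps\le R$, so by Helly's selection theorem a subsequence converges pointwise to a non-increasing function $\bar G\le R$. By the second paragraph and $c_{\eps_k}(t)\to c(t)$ in $L^2(\Omega)$ from Lemma~\ref{lem:cconv}, the kinetic part $K_{\eps_k}(t):=\tfrac12\int_\Omega\rho(c_{\eps_k}(t))|\ve_{\eps_k}(t)|^2\,dx$ converges for a.e.\ $t$ to $K(t):=\tfrac12\int_\Omega\rho(c(t))|\ve(t)|^2\,dx$; hence $\E_{\eps_k}(c_{\eps_k}(t))=G_{\eps_k}(t)-K_{\eps_k}(t)\to \E(t):=\bar G(t)-K(t)$ for almost every $t$, with $\E$ measurable and $\E\le R$. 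The perimeter bound is then the Modica--Mortola $\liminf$ inequality: for a.e.\ $t$ one has $c_{\eps_k}(t)\to-1+2\chi_{E_t}$ in $L^1(\Omega)$, so $2\sigma\,|\nabla\chi_{E_t}|(\Omega)\le\liminf_k\E_{\eps_k}(c_{\eps_k}(t))=\E(t)$.

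The main obstacle is the velocity compactness, and within it the time-derivative estimate. Because the capillary stress lives only in $L^\infty_tL^1_x$ (it concentrates on the interface in the limit) and the density is variable, no direct Aubin--Lions bound on $\ve_\eps$ is available; one must pass through the solenoidal momentum $\mathbf m_\eps$ in the weak topology of $(W^{2,q}_{0,\sigma})'$ and recover strong convergence of $\ve_\eps$ only afterwards, through the $\rho$-weighted quadratic identity. I would also flag that the genuinely monotone object produced by Helly is the \emph{total} energy $\bar G$; the free-energy limit $\E=\bar G-K$ inherits measurability and the bound $\E\le R$ and serves as the surface-energy density in the limiting energy inequality, so it is the monotonicity of $\bar G$ that is actually exploited downstream.
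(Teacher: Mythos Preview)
Your proposal is correct and follows essentially the same route as the paper: Aubin--Lions compactness for the solenoidal momentum $P_\sigma(\rho(c_\eps)\ve_\eps)$ (the paper places its time derivative in $L^{8/7}(0,T;(L^2_\sigma\cap W^{1,\infty}(\Omega))')$ via a slightly sharper interpolation on $\ve_\eps\otimes\tilde\Je_\eps$, rather than your $L^1(0,T;(W^{2,q}_{0,\sigma}(\Omega))')$, but either suffices), then the $\rho$-weighted quadratic identity to upgrade to strong $L^2$ convergence of $\ve_{\eps_k}$, and Helly's theorem on the total energy for the second part. Your caveat that only the total energy $\bar G$, not $\E$ itself, is genuinely non-increasing is well taken; the paper likewise works with the total energy and defers the remaining details to Chen.
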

\begin{proof}
Let us fix some $T>0$ and let $P_\sigma:L^2(\Omega)^d\rightarrow L^2_\sigma(\Omega)$ denote the Helmholtz projection. In order to prove the claim concerning $\ve_{\eps_k}$ it suffices to show that for a subsequence we have 
\begin{equation}\label{eq:stconv}
P_\sigma(\rho(c_{\eps_k})\ve_{\eps_k})\rightarrow_{k\rightarrow\infty}P_\sigma(\rho(c)\ve)
\quad\text{ in }L^2((0,T);(L^2_\sigma(\Omega)\cap H^1(\Omega)^d)')
\end{equation}
since then
\begin{align*}
\int_0^T\int_\Omega\rho(c_{\eps_k})|\ve_{\eps_k}|^2\, d\sd t=\int_0^T\int_\Omega P_\sigma(\rho(c_{\eps_k})\ve_{\eps_k})\cdot\ve_{\eps_k}\, dx\sd t\\
\rightarrow_{k\to\infty} \int_0^T\int_\Omega P_\sigma(\rho(c)\ve)\cdot\ve\, dx\sd t=\int_0^T\int_\Omega\rho(c)|\ve|^2\, dx\sd t,
\end{align*}
 and from this convergence, the strong convergence of $(c_{\eps_k})$, and the strict positivity of $\rho$ we easily deduce the claim, cf. \cite{ModelH}. But \eqref{eq:stconv} follows from the Aubin-Lions lemma by noting that, firstly, 
\[L^2_\sigma(\Omega)\hookrightarrow\hookrightarrow(L^2_\sigma(\Omega)\cap H^1(\Omega)^d)'\hookrightarrow (L^2_\sigma(\Omega)\cap W^{1,\infty}(\Omega))'\]
and that, secondly, the distributional time-derivative of $(P_\sigma(\rho(c_{\eps_k})\ve_{\eps_k}))$ is uniformly bounded in $L^{8/7}((0,T);(L^2_\sigma(\Omega)\cap W^{1,\infty}(\Omega))')$. This last bound follows by estimating each term in \eqref{eq:weakNSCH1}. We have (appreviating $L^p((0,T);L^{q}(\Omega))$ by $L^p L^q$)
\begin{align*}
\|\rho(c_\eps)\ve_\eps\otimes\ve_\eps\|_{L^2L^{3/2}}&\le \|\rho(c_\eps)\ve_\eps\|_{L^\infty L^{2}}\|\ve_\eps\|_{L^2L^{6}},\\
\|\ve_\eps\otimes \tilde\Je_\eps\|_{L^{8/7}L^{4/3}}&\le \|\ve_\eps\otimes \tilde\Je_\eps\|_{L^{1}L^{3/2}}^{3/4}\|\ve_\eps\otimes \tilde\Je_\eps\|_{L^{2}L^{1}}^{1/4}\\
&\le C \|\ve_\eps\|_{L^{2}L^{6}}^{3/4}\|\ve_\eps\|_{L^{\infty}L^{2}}^{1/4}\|m(c_\eps)|\nabla\mu_\eps|^2\|_{L^{1}L^{1}},\\
\|\nu(c_\eps)D\ve_\eps\|_{L^{2}L^{2}}&\le C\|D\ve_\eps\|_{L^{2}L^{2}},\\
\|\eps\, a(c_\eps)\nabla c_\eps\otimes\nabla c_\eps\|_{L^{\infty}L^{1}}&\le C\|\eps|\nabla A(c_\eps)|^2\|_{L^{\infty}L^{1}}.
\end{align*}
Concerning the remaining claims we note that the total energies
   \begin{equation*}
     \E^{tot}_\eps(t):= \frac12 \|v_\eps(t)\|_{L^2(\Omega)}^2 + \E_{\eps}(c_\eps(t)),\quad t\geq 0,
   \end{equation*}
form a sequence of bounded, non-increasing functions and that $\ve_{\eps_k}(t)\to_{k\to\infty} \ve(t)$ for almost all $t>0$ in $L^2(\Omega)$. Now, we can proceed exactly as in the proof of \cite[Lemma~3.3]{ChenLimitCH}.
\end{proof}

Finally, we define the discrepancy function by
\[\xi^\eps(c_\eps):= \frac{\eps}2|\nabla A(c_\eps)|^2 -\frac1\eps f(c_\eps).\]
\begin{theorem}\label{thm:disc}
For all sufficiently small $\eta>0$ there exists a constant $C(\eta)$ such that for all sufficiently small $\eps>0$ (the maximal $\eps$ may depend on $\eta$) we have
\begin{equation*}
\int_{0}^T\int_\Omega (\xi^\eps(c_\eps))^+\, d(x,t)\leq \eta \int_0^T\int_{\Omega} e_\eps(c_\eps)\, d(x,t) +\eps\, C(\eta)\int_0^T \int_\Omega|\mu_\eps|^2\, d(x,t).
\end{equation*} 
Combining this estimate with the assumption $\eps/\overline m_\eps\rightarrow_{\eps\rightarrow 0}0$ and \eqref{eq:L2ChemPotEstim} we deduce that
\begin{equation*}
  \lim_{\eps \to 0} \int_{0}^T\int_\Omega (\xi^\eps(c_\eps))^+\, d(x,t)=0\qquad \text{for all}\ 0<T<\infty.
\end{equation*}
\end{theorem}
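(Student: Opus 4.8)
My plan is to remove the coefficient $a$ by a change of unknown that reduces the estimate to the constant-mobility Cahn--Hilliard situation of Chen \cite{ChenLimitCH}. Set $v_\eps := A(c_\eps)$, so that $\nabla v_\eps = \sqrt{a(c_\eps)}\,\nabla c_\eps$ and, writing $\hat f := f\circ A^{-1}$, one has $e_\eps(c_\eps)=\frac\eps2|\nabla v_\eps|^2+\frac1\eps\hat f(v_\eps)$ and $\xi^\eps(c_\eps)=\frac\eps2|\nabla v_\eps|^2-\frac1\eps\hat f(v_\eps)$. Dividing the reformulated potential equation \eqref{eqn:elliptic} by $\sqrt{a(c_\eps)}$ and putting $\hat\mu_\eps := \mu_\eps/\sqrt{a(c_\eps)}$ turns it into $\hat\mu_\eps = \eps^{-1}\hat f'(v_\eps)-\eps\Delta v_\eps$, which is exactly the Euler--Lagrange relation in Chen's setting. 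Since $c_0\le a\le C_0$, the map $A$ is an increasing bi-Lipschitz diffeomorphism, so $\hat f$ inherits from $f$ the properties needed below: $\hat f\ge 0$ with nondegenerate zeros at $v=A(\pm1)$ and a matching growth at infinity.

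In these variables I would follow Chen's discrepancy argument. The structural starting point is the Korteweg-stress identity, obtained directly from the reformulated equation,
\[
\Div\big(\eps\,\nabla v_\eps\otimes\nabla v_\eps-e_\eps\,I\big)=-\hat\mu_\eps\,\nabla v_\eps,
\]
whose normal--normal component in the direction $\nabla v_\eps/|\nabla v_\eps|$ equals precisely the discrepancy $\xi^\eps$ (indeed $\eps|\nabla v_\eps|^2-e_\eps=\xi^\eps$). Thus $\hat\mu_\eps$ is the sole source of $\xi^\eps$, and testing the identity against a vector field aligned with $\nabla v_\eps$ and localized by a cutoff expresses $\int(\xi^\eps)^+$ through the source $\hat\mu_\eps\nabla v_\eps$ plus lower-order curvature and boundary terms. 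A Young inequality on $\hat\mu_\eps\nabla v_\eps$ that splits off a factor $\eps$ then yields, for any prescribed $\eta>0$, a contribution of the form $\eta\,e_\eps+\eps\,C(\eta)\,\hat\mu_\eps^2$; the nondegeneracy of $\hat f$ at its wells is what lets the remaining bulk be absorbed into $\eta\,e_\eps$. Integrating in time and using $\hat\mu_\eps^2=\mu_\eps^2/a(c_\eps)\le c_0^{-1}\mu_\eps^2$ gives the asserted inequality. The boundary contributions are controlled exactly as in the proof of Lemma~\ref{lem:muest} leading to \eqref{eq:fvare}, using the Neumann condition $\no_{\partial\Omega}\cdot\nabla c_\eps=0$, which also gives $\no_{\partial\Omega}\cdot\nabla v_\eps=0$.

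The hard part is not the reduction but the core of Chen's estimate: extracting the \emph{arbitrarily small} coefficient $\eta$ in front of $e_\eps$. This is what forces the delicate localization and the comparison with the one-dimensional optimal profile, exploiting quantitatively that $\hat f$ vanishes nondegenerately at its two wells, which here is guaranteed by $f''(\pm1)>0$ and the $p$-growth in Assumption~\ref{assumptions}; a crude estimate would only give $\int(\xi^\eps)^+\le\int e_\eps$ with no smallness. The accompanying technical points are to verify that $\hat f$ meets the precise hypotheses of \cite{ChenLimitCH} under the mere assumptions $a\in C^1$, $c_0\le a\le C_0$, and to control the extra lower-order terms involving $a'$ that appear in the integrations by parts.

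Finally, the limit statement follows by passing $\eps\to0$ in the inequality. The energy bound \eqref{eq:enin} gives $\int_0^T\!\int_\Omega e_\eps(c_\eps)\,d(x,t)=\int_0^T\E_\eps(c_\eps(t))\,dt\le RT$, so the first term is at most $\eta RT$. For the second term I would write $\eps\int_0^T\!\int_\Omega|\mu_\eps|^2\,d(x,t)=\frac{\eps}{\overline{m}_\eps}\,\overline{m}_\eps\int_0^T\|\mu_\eps\|_{L^2(\Omega)}^2\,dt\le \frac{\eps}{\overline{m}_\eps}\,C(R,T)$ by \eqref{eq:L2ChemPotEstim}, which tends to $0$ because $\eps/\overline{m}_\eps\to0$. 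Hence $\limsup_{\eps\to0}\int_0^T\!\int_\Omega(\xi^\eps)^+\,d(x,t)\le\eta RT$ for every $\eta>0$, and letting $\eta\to0$ yields the claim.
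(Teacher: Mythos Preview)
Your approach is essentially the paper's: substitute $v_\eps=A(c_\eps)$, divide \eqref{eqn:elliptic} by $\sqrt{a(c_\eps)}$ to land in the standard Cahn--Hilliard form, and invoke Chen's discrepancy estimate \cite[Theorem~3.6]{ChenLimitCH}; the limit argument via $\eps/\overline m_\eps\to 0$ and \eqref{eq:L2ChemPotEstim} is identical. The one refinement in the paper that you leave as a ``technical point to verify'' is handled there by composing with the affine map $B(c)=c\,\tfrac{A(1)-A(-1)}{2}+\tfrac{A(1)+A(-1)}{2}$, so that the transformed potential $\tilde f(c)=f(A^{-1}(B(c)))/(B')^2$ has its wells exactly at $\pm1$ and Chen's theorem applies verbatim, with $\tilde\xi^\eps=\xi^\eps/(B')^2$ and $\tilde e_\eps=e_\eps/(B')^2$ giving the claim immediately.
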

\begin{proof}
The proof is based on the elliptic equation \eqref{eqn:elliptic} which can be written in the form
\begin{equation*}
 \mu_\eps\, a(A(c_\eps))^{-1/2}=\frac{(f\circ A^{-1})'(A(c_\eps))}{\eps}-\eps\,\Delta A(c_\eps).
\end{equation*}
Let $c_{\pm}:=A(\pm 1)$, $B(c):=c\frac{c_+-c_-}{2}+\frac{c_+ + c_-}{2}$, and $\tilde f(c):=f(A^{-1}(B(c)))/(B')^2$ for $c\in\R$. Then $\tilde f$ fulfills Assumption \ref{assumptions}, and for $\tilde c_\eps:=B^{-1}(A(c_\eps))$ we have
\begin{equation*}
 \mu_\eps\, a(A(c_\eps))^{-1/2}(B')^{-1}=\frac{\tilde f'(\tilde c_\eps)}{\eps}-\eps\,\Delta \tilde c_\eps.
\end{equation*}
Since the function $a(A(c_\eps))^{-1/2}(B')^{-1}$ is uniformly bounded, \cite[Theorem~3.6]{ChenLimitCH} yields 
\begin{equation}\label{eqn:discr}
\int_{0}^T\int_\Omega (\tilde\xi^\eps(c_\eps))^+\, d(x,t)\leq \eta \int_0^T\int_{\Omega} \tilde e_\eps(c_\eps)\, d(x,t) +\eps\, C(\eta)\int_0^T \int_\Omega|\mu_\eps|^2\, d(x,t)
\end{equation} 
where 
\begin{align*}
\tilde\xi^\eps(\tilde c_\eps)&:=\frac{\eps}2|\nabla\tilde c_\eps|^2 -\frac1\eps \tilde f(\tilde c_\eps)=\xi^\eps(c_\eps)/(B')^2\\
\tilde e^\eps(\tilde c_\eps)&:=\frac{\eps}2|\nabla\tilde c_\eps|^2 +\frac1\eps \tilde f(\tilde c_\eps)=e^\eps(c_\eps)/(B')^2.
\end{align*}
This proves the claim.
\end{proof}

Using the previous statements, we can now easily finish the proof of
Theorem~\ref{thm:SharpInterfaceLimit} by the arguments of
\cite[Section~3.5]{ChenLimitCH}. To be more precise, item 1 follows from \eqref{eq:EstimIV} and Lemmas \ref{lem:muest} and \ref{lem:vest}. Item 2 follows from Lemma \ref{lem:cconv} and the energy inequality \eqref{eq:enin}. Item 3 follows from \eqref{eq:enin} as well. Furthermore, we note that $\RM=\RM^t dt$ for Radon measures $\RM^t$ on $\ol{\Omega}$ since
\begin{equation*}
  \lambda (A\times I) \leq \lambda (\ol{\Omega}\times I)= \lim_{k\to\infty}\int_I \E_{\eps_k}(\tau)\sd \tau \leq |I|R
\end{equation*}
for any measurable $A\subseteq \ol{\Omega}, I\subseteq [0,\infty)$.
Similarly, we also get $\lambda^t(\ol\Omega)=\E(t)$ for almost all $t\in(0,\infty)$ due to \eqref{eqn:en}. From \eqref{eq:enin'} we deduce that
\begin{align*}
  \RM^t(\ol{\Omega}) &= \lim_{k\to\infty} \E_{\eps_k}(c_{\eps_k}(t)) \\
  &\leq -\liminf_{k\to\infty} \int_s^t
    \int_\Omega \left(\nu(c_{\eps_k}) |D \ve_{\eps_k}|^2 + m_{\eps_k}(c_{\eps_k})
      |\nabla \mu_{\eps_k}|^2\right) d(x,\tau)\\
  &\quad + \lim_{k\to \infty} \left(\E_{\eps_k}(c_{\eps_k}(s))+\frac12\int_\Omega\rho(c_{\eps_k}(s))|\ve_{\eps_k}(s)|^2\,dx - \frac12\int_\Omega\rho(c_{\eps_k}(t))|\ve_{\eps_k}(t)|^2\,dx\right)\\
  &\leq - \int_s^t
  \int_\Omega \left(\nu(c) |D \ve|^2\sd x - \Je\cdot\nabla\mu\right)d(x,\tau)+\RM^s(\ol{\Omega})\\
&\quad +\frac12\int_\Omega\rho(c(s))|\ve(s)|^2\,dx
    -\frac12\int_\Omega\rho(c(t))|\ve(t)|^2\,dx
\end{align*}
for almost all $0 < s < t<\infty$ where $c:=-1+2\chi_{E}$. This is \eqref{eq:EnergyEstim}. Item 5 follows similarly. We can proceed as in \cite[Section~3.5]{ChenLimitCH} to construct the varifold $V$. Therefore, we only give a sketch. We deduce from Theorem \ref{thm:disc} that for all $\etae_0,\etae_1\in C^1(\ol\Omega;\R^d)$ and all $0<T<\infty$
\[\int_0^T\int_{\ol{\Omega}}\etae_0\otimes \etae_1:(d\lambda_{ij})\le\int_0^T\int_{\ol{\Omega}}|\etae_0||\etae_1|\ d\lambda.\]
This proves the existence of $\lambda$-measurable $\R$-valued, non-negative functions $\gamma_i$  and $\lambda$-measurable unit vector fields $\nue_i$, $i=1,\ldots,d$, such that 
\begin{align*}
(\lambda_{ij})=\sum_{i=1}^d\gamma_i\,\nue_i\otimes\nue_i\,\lambda \quad\text{ and }\quad
\sum_{i=1}^d\gamma_i\le 1,\ \sum_{i=1}^d\nue_i\otimes\nue_i=I\quad\lambda\text{-a.e.}
\end{align*}
We denote the equivalence class of $\nue_i(x,t)$ in $G_{d-1}$ by $\pe_i^t(x)$, define the functions $b_i^t$ by
\[b_i^t(x):=\gamma_i(x,t)+\frac{1}{d-1}\Big(1-\sum_{i=1}^d\gamma_i(x,t)\Big)\]
and define the varifold $V$ as in \eqref{eqn:V}. Then item 3 in Definition \ref{defn:VarifoldSolution} follows taking into account \eqref{eqn:en}. Furthermore, in the case $m_0>0$ we infer from \eqref{eq:fvare} that 
\begin{align*}
 \int_\Omega 2\chi_{E_t}\Div(\mu\etae)\, dx=\int_\Omega \nabla\etae:\big(d\lambda\, I-(d\lambda_{ij})_{i,j=1}^d\big)&=\int_\Omega \nabla\etae:\sum_{i=1}^d b_i^t\big(I-\pe_i^t\otimes\pe_i^t\big)\, d\lambda\\
&=\left\langle \delta V^t,\etae\right\rangle
\end{align*}
for all $\etae\in C^1_0(\ol\Omega;\R^d)$ and almost all $t\in (0,\infty)$. This is \eqref{eq:fvar}. Furthermore, these calculations prove \eqref{eq:ReprFirstVar}. Similarly, \eqref{eq:ns'} and \eqref{eq:mu'} follow from  \eqref{eq:weakNSCH1} and \eqref{eq:weakNSCH2}, respectively, where one uses that 
\begin{equation*}
  \int_Q\eps a(c_\eps)\nabla c_\eps \otimes \nabla c_\eps : \nabla \phie \sd (x,t)= \int_Q \phie\cdot\nabla c_\eps \mu_\eps \sd (x,t) \to_{\eps\to 0} \left\langle \delta V^t,\phie\right\rangle
\end{equation*}
for all $\phie\in C^\infty([0,\infty);C_0^\infty(\Omega))$.
This proves item 4 in Definition \ref{defn:VarifoldSolution}. Finally, item 2 in Definition \ref{defn:VarifoldSolution} follows from Lemma \ref{lem:cconv}. This concludes the proof of Theorem \ref{thm:SharpInterfaceLimit}.

In the radially symmetric case we can prove a stronger statement concerning the discrepancy measure. 

\begin{theorem}\label{thm:Radial}
Let $\Omega=B_1(0)$, and assume that the solutions $(\ve_\eps,c_\eps,\mu_\eps)$ are radially symmetric. Assume, furthermore, that $A(c)=c$ for all $c\in\R$, and that the constants $\ol{m}_\eps$ in the Assumptions \ref{assumptions} satisfy 
\begin{equation}
  \label{eq:ConMeps}
\eps^{\frac{1}{d-1}}/\ol{m}_\eps\to_{\eps\to 0}0.  
\end{equation}
  Then, for all $T>0$, we have
\[\lim_{\eps\searrow 0}\int_{0}^T\int_\Omega |\xi^\eps(c_\eps)|\, d(x,t)=0.\]
\end{theorem}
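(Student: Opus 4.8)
The plan is to split $\int_0^T\int_\Omega|\xi^\eps(c_\eps)|\,d(x,t)$ into positive and negative parts and to reduce the claim to a statement about the \emph{signed} integral. Since $\eps^{1/(d-1)}/\overline{m}_\eps\to 0$ forces in particular $\eps/\overline{m}_\eps\to 0$, Theorem~\ref{thm:disc} already gives $\int_0^T\int_\Omega(\xi^\eps(c_\eps))^+\,d(x,t)\to 0$. Writing $(\xi^\eps)^-=(\xi^\eps)^+-\xi^\eps$, it therefore suffices to prove that $\int_0^T\int_\Omega\xi^\eps(c_\eps)\,d(x,t)\to 0$; since its $\limsup$ is already $\le 0$ by the bound on the positive part, only a matching lower bound has to be established, and then $\int(\xi^\eps)^-\to 0$ and hence $\int|\xi^\eps|\to 0$ follow.

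First I would exploit the radial symmetry to turn the problem into an ODE for the discrepancy. With $A(c)=c$ (so $a\equiv 1$), equation \eqref{eqn:elliptic} reads $\mu_\eps=\eps^{-1}f'(c_\eps)-\eps\Delta c_\eps$, and writing $c_\eps=c_\eps(r)$, $\Delta=\partial_r^2+\tfrac{d-1}{r}\partial_r$, a direct computation using $\xi^\eps=\tfrac{\eps}{2}(\partial_r c_\eps)^2-\eps^{-1}f(c_\eps)$ yields
\[\partial_r\xi^\eps=-\mu_\eps\,\partial_r c_\eps-\tfrac{d-1}{r}\eps(\partial_r c_\eps)^2,\qquad \partial_r\big(r^{d-1}\xi^\eps\big)=-(d-1)r^{d-2}e_\eps(c_\eps)-r^{d-1}\mu_\eps\,\partial_r c_\eps.\]
Because $\partial_r c_\eps$ vanishes at $r=0$ (smoothness at the origin) and at $r=1$ (the homogeneous Neumann condition), one has $\xi^\eps=-\eps^{-1}f(c_\eps)\le 0$ at both endpoints. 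Integrating the second identity from $r$ to $1$ represents $r^{d-1}\xi^\eps(r)$ as the sum of the boundary value $\xi^\eps(1)\le 0$, the manifestly nonnegative curvature term $(d-1)\int_r^1 s^{d-2}e_\eps(c_\eps)\,ds$, and the chemical-potential work $\int_r^1 s^{d-1}\mu_\eps\,\partial_s c_\eps\,ds$.

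The heart of the argument is to bound the negative part of this representation. The nonnegative curvature term must be \emph{retained} and balanced against the chemical-potential term: for a near-optimal profile these two are of comparable order and nearly cancel, which is the diffuse analogue of the Gibbs--Thomson/Young--Laplace relation $\mu\approx\STC H$, so that $r^{d-1}\xi^\eps$ stays close to $0$. Quantitatively I would bound $\int_r^1 s^{d-1}\mu_\eps\,\partial_s c_\eps\,ds$ from below by absorbing the gradient against the curvature term and controlling the remaining chemical-potential mass through \eqref{eq:L2ChemPotEstim}, namely $\overline{m}_\eps\int_0^T\|\mu_\eps\|_{L^2(\Omega)}^2\,dt\le C(R,T)$. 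The volume constraint (Lemma~\ref{lem:cconv}) pins the interface to the fixed sphere $\{|x|=R\}$ with $R=((1+\bar c)/2)^{1/d}\in(0,1)$ bounded away from $0$ and $1$; together with the geometric weight $r^{d-1}$ this is what converts the $L^2$-bound on $\mu_\eps$ into a factor carrying the exponent $1/(d-1)$, and the hypothesis $\eps^{1/(d-1)}/\overline{m}_\eps\to 0$ is tuned precisely so that this contribution vanishes. Since $R$ is interior, the endpoint terms $\eps^{-1}f(c_\eps(0))$ and $\eps^{-1}f(c_\eps(1))$ are negligible, which I would justify by the smallness of $e_\eps(c_\eps)$ away from the interface.

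The main obstacle is exactly this last estimate. A crude Cauchy--Schwarz or Young bound on $\int\mu_\eps\,\partial_r c_\eps$ loses a full power of $\eps$: the gradient is only controlled through $\|\partial_r c_\eps\|_{L^2}\sim\eps^{-1/2}$ while $\|\mu_\eps\|_{L^2}\sim\overline{m}_\eps^{-1/2}$, so splitting the product would merely produce a divergent bound of order $(\eps\,\overline{m}_\eps)^{-1}$. One must instead use the good sign of the curvature term together with the near-equipartition already guaranteed by the smallness of $(\xi^\eps)^+$, so that the cancellation inherent in the \emph{signed} integral $\int\mu_\eps\,\partial_r c_\eps$ is not destroyed. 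Extracting the correct power $\eps^{1/(d-1)}$ rather than $\eps^{-1}$ from this balance, uniformly in time, is the technically delicate step.
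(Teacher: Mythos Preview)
Your outline contains the correct opening moves but stops precisely where the work begins. The reduction to the signed integral via Theorem~\ref{thm:disc} is fine, and your radial identity $\partial_r\big(r^{d-1}\xi^\eps\big)=-(d-1)r^{d-2}e_\eps(c_\eps)-r^{d-1}\mu_\eps\,\partial_r c_\eps$ is correct and is indeed the starting point. But the last paragraph is an admission that you do not know how to control the term $\int s^{d-1}\mu_\eps\,\partial_s c_\eps\,ds$: you describe the difficulty accurately, invoke a heuristic Gibbs--Thomson cancellation, and then simply assert that extracting $\eps^{1/(d-1)}$ ``is the technically delicate step'' without carrying it out. There is no inequality here, only a hope that two large terms cancel. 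The claim that the interface is ``pinned'' to a fixed sphere by the volume constraint is also not available at finite $\eps$ and plays no role in the actual argument.

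The paper proceeds quite differently and bypasses the cancellation problem entirely. One integrates the chemical-potential term by parts in your identity to obtain the \emph{pointwise} bound $|\xi^\eps(c_\eps(r,t))+\mu_\eps(r,t)\,c_\eps(r,t)|\le C\,r^{1-d}M^\eps(t)$ with $M^\eps=1+\|\mu_\eps\|_{H^1}+\eps\|\mu_\eps\|_{H^1}^2$, together with the small-ball energy bound $\int_{B_\delta}e_\eps\le C\delta M^\eps$. One then decomposes $\Omega$ not by the sign of $\xi^\eps$ but by the size of $|c_\eps|$: on $B_\delta\cup\{|c_\eps|\ge 1-\eta\}$ one bounds $|\xi^\eps|\le e_\eps$ and uses the small-ball bound and Chen's estimate \eqref{eq:Lem44Estim}; on $\{r>\delta,\ |c_\eps|<1-\eta\}$ one uses the pointwise bound and $|\mu_\eps c_\eps|\le(1-\eta)|\mu_\eps|$, and the smallness of the measure of this set (from \eqref{eq:cest2} and \eqref{eq:Lem44Estim}). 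Collecting terms yields
\[
\int_\Omega|\xi^\eps|\le C'\eta\,\E_\eps+C'\eps\|\mu_\eps\|_{L^2}^2+C''(\eta)M^\eps\bigl(\eps^{1/2}+\delta^{1-d}\eps+\delta\bigr),
\]
and the exponent $1/(d-1)$ appears only when one balances $\delta^{1-d}\eps$ against $\delta$ by choosing $\delta=\eps^{1/(2d-2)}$; it is this optimisation, not any cancellation in $\int\mu_\eps\,\partial_r c_\eps$, that turns the hypothesis $\eps^{1/(d-1)}/\overline m_\eps\to 0$ into the conclusion via \eqref{eq:L2ChemPotEstim}. The missing idea in your proposal is precisely this decomposition by $|c_\eps|$ together with the use of \eqref{eq:Lem44Estim}.
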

For the proof we need the following result from \cite[Lemma~4.4]{ChenLimitCH}.
\begin{lem}
 There exist positive constants $C_0$ and $\eta_0$ such that for every $\eta\in [0,\eta_0]$, $\eps\in (0,1]$, and every $(u^\eps,v^\eps)\in H^2(\Omega)\times L^2(\Omega)$ such that
  \begin{equation*}
    v^\eps = -\eps \Delta u^\eps +\eps^{-1}f'(u^\eps),\qquad \no_{\partial\Omega}\cdot \nabla u^\eps|_{\partial\Omega} =0
  \end{equation*}
we have
\begin{eqnarray}\nonumber
\lefteqn{ \int_{\{x\in\Omega:u^\eps|\geq 1-\eta\}}\left(e^\eps(u^\eps)+\eps^{-1}(f'(u^\eps))^2\right)}\\\label{eq:Lem44Estim}
&\leq& C_0\eta \int_{\{x\in\Omega:|u^\eps|\leq 1-\eta\}}\eps |\nabla u^\eps|^2\sd x + C_0\eps\int_\Omega |v^\eps|^2 \sd x
\end{eqnarray}
\end{lem}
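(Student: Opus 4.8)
The plan is to test the elliptic equation $v^\eps=-\eps\Delta u^\eps+\eps^{-1}f'(u^\eps)$ against a multiplier $g(u^\eps)$ tailored to the level $1-\eta$, and to read off \eqref{eq:Lem44Estim} from the resulting energy identity. Write $\Omega^+_\eta:=\{x\in\Omega:|u^\eps|\ge 1-\eta\}$ and $\Omega^-_\eta:=\{x\in\Omega:|u^\eps|<1-\eta\}$. I would take $g$ odd and Lipschitz, with $g(s)=f'(s)$ for $|s|\ge 1-\eta$ and $g(s)=-\kappa s$ on $(-(1-\eta),1-\eta)$, where the slope $\kappa:=|f'(1-\eta)|/(1-\eta)\ge 0$ is fixed by continuity (recall $f'(1-\eta)<0$ for the double well). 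Since $f'(\pm1)=0$ and $f\in C^3$, a Taylor expansion gives $\kappa=O(\eta)$; this is the crux of the construction, because $\kappa$ is precisely the small constant that will multiply the transition gradient. Multiplying the equation by $g(u^\eps)$, integrating over $\Omega$, and integrating by parts (the boundary term drops out by $\no_{\partial\Omega}\cdot\nabla u^\eps|_{\partial\Omega}=0$) gives
\[
\eps\int_\Omega g'(u^\eps)|\nabla u^\eps|^2\,dx+\eps^{-1}\int_\Omega f'(u^\eps)g(u^\eps)\,dx=\int_\Omega v^\eps g(u^\eps)\,dx .
\]

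On $\Omega^+_\eta$ one has $g'=f''\ge\delta>0$ after shrinking $\eta_0$ (by the growth hypothesis $f''(s)\ge c_0|s|^{p-2}$ for $|s|\ge 1-c_0$, so $f''\ge\delta:=c_0(1-c_0)^{p-2}$ on $\Omega^+_\eta$ once $\eta_0\le c_0$), and $f'g=(f')^2\ge0$; thus the bulk contributes positively both $\delta\eps\int_{\Omega^+_\eta}|\nabla u^\eps|^2$ and $\eps^{-1}\int_{\Omega^+_\eta}(f')^2$. On $\Omega^-_\eta$ one has $g'=-\kappa$ with $\kappa=O(\eta)$, so after moving this term to the right the transition gradient enters with the desired factor $\eta$; moreover $f'(s)g(s)=-\kappa s\,f'(s)\ge0$ there, using $s f'(s)\le0$ on $(-1,1)$ (valid for the prototype $f(s)=(s^2-1)^2$), which produces a nonnegative \emph{helper} term $H:=\kappa\eps^{-1}\int_{\Omega^-_\eta}|s|\,|f'(s)|\,dx$. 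The region $\{|u^\eps|>1\}$, if nonempty, causes no trouble: there $g'=f''>0$ and $f'g=(f')^2\ge0$ are all of the favorable sign and only strengthen the left-hand side.

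The decisive step is to absorb the cross term $\int_{\Omega^-_\eta}v^\eps g(u^\eps)=-\kappa\int_{\Omega^-_\eta}v^\eps u^\eps$ using $H$ rather than against the (uncontrolled) measure of $\Omega^-_\eta$, which is what would otherwise generate an inadmissible $\eps^{-1}\int_{\Omega^-_\eta}f$ term. By Young's inequality $|\kappa\int_{\Omega^-_\eta} v^\eps u^\eps|\le\frac\eps2\int_{\Omega^-_\eta}|v^\eps|^2+\frac{\kappa^2}{2\eps}\int_{\Omega^-_\eta}|u^\eps|^2$, and the last term is dominated by $\tfrac12 H$ exactly because the elementary pointwise inequality $\kappa|s|\le|f'(s)|$ holds on $(-(1-\eta),1-\eta)$ (with equality at $s=1-\eta$ by the choice of $\kappa$). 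After this absorption, the $\frac\eps2\int|v^\eps|^2$ pieces are harmless, and combining the bulk positivity with $f''\ge\delta$ yields
\[
\delta\,\eps\int_{\Omega^+_\eta}|\nabla u^\eps|^2\,dx+\tfrac1{2\eps}\int_{\Omega^+_\eta}(f'(u^\eps))^2\,dx\le C\,\eps\int_\Omega|v^\eps|^2\,dx+C\,\eta\,\eps\int_{\Omega^-_\eta}|\nabla u^\eps|^2\,dx .
\]
Finally the missing potential part of $e^\eps$ is recovered from the inequality $f(s)\le C\,(f'(s))^2$ on $\{|s|\ge 1-\eta\}$ (Taylor expansion at the wells, together with the $p$-growth $p\ge3$, for which $2p-2\ge p$ controls the behaviour at infinity), so that $\eps^{-1}\int_{\Omega^+_\eta}f\le C\eps^{-1}\int_{\Omega^+_\eta}(f')^2$; adding this to the last display produces exactly \eqref{eq:Lem44Estim}.

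I expect the cross-term absorption to be the main obstacle, since it forces the slope $\kappa=|f'(1-\eta)|/(1-\eta)$ and relies on the behaviour of $f$ between the wells through $\kappa|s|\le|f'(s)|$ and $sf'(s)\le0$. These are transparent for $f(s)=(s^2-1)^2$, but for a general $f$ obeying only Assumption~\ref{assumptions} they would require either a mild additional structural condition near and inside the wells or a more flexible, piecewise choice of $g$ adapted to the sign of $f'$ on $(-1,1)$. The only other point deserving care is the chain rule $\nabla g(u^\eps)=g'(u^\eps)\nabla u^\eps$ for the Lipschitz $g$, which is justified because $\nabla u^\eps=0$ a.e. on the level set $\{|u^\eps|=1-\eta\}$ where $g'$ jumps.
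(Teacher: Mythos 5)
You should know that the paper offers no proof of this lemma at all: it is imported verbatim from \cite[Lemma~4.4]{ChenLimitCH}, so the only meaningful comparison is with Chen's argument, and your multiplier construction is essentially that argument --- test the equation with $g(u^\eps)$, where $g$ agrees with $f'$ on the convex region $\{|s|\ge 1-\eta\}$ and is flattened to slope $O(\eta)$ across the spinodal region, so that $f''\ge \delta>0$ and $f\le C(f')^2$ near and beyond the wells produce the left-hand side of \eqref{eq:Lem44Estim}, while the transition-layer gradient is picked up with the crucial factor $\eta$. Your bookkeeping is correct: the chain rule for the Lipschitz $g$ (using $\nabla u^\eps=0$ a.e.\ on the level set where $g'$ jumps), the vanishing Neumann boundary term, the Young absorption of $\int_\Omega v^\eps g(u^\eps)$ into $\eps\int_\Omega|v^\eps|^2$ plus the helper term $H$ via the pointwise bound $\kappa|s|\le|f'(s)|$, and the recovery of $\eps^{-1}f$ from $\eps^{-1}(f')^2$ by Taylor expansion at the wells together with $f''\ge c_0|s|^{p-2}$. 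Two caveats, both of which you flag yourself, should be made explicit. First, for a non-even $f$ the odd chord $-\kappa s$ cannot match both endpoint values $f'(\pm(1-\eta))$; one must instead take the affine interpolant (still of size and slope $O(\eta)$, since $f'(\pm(1-\eta))=O(\eta)$ by Taylor at the wells) or, better, the piecewise-linear $g$ vanishing at the interior zero of $f'$, which restores $f'g\ge 0$ and $|g|\le c|f'|$ on the middle interval. Second, these interior sign and comparison conditions genuinely do not follow from Assumption~\ref{assumptions}, which tolerates interior critical points of $f$; they come from the stronger structural hypotheses on the double well imposed in \cite{ChenLimitCH}, which is the setting in which the lemma is actually invoked here. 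Your diagnosis that the cross-term absorption is the crux is exactly right: without some interior control of $f'$, the term $\kappa^{2}\eps^{-1}\int_{\{|u^\eps|<1-\eta\}}|u^\eps|^2$ admits only the inadmissible bound $O(\eta^2/\eps)\,|\Omega|$, so some such condition is indispensable for this line of proof.
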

\noindent
\begin{proof*}{of Theorem~\ref{thm:Radial}}
 We can show exactly like in \cite[Proof of Theorem 5.1]{ChenLimitCH} that there exists a constant $C>0$ such that for almost all $t>0$ we have
\begin{alignat}{2}\label{eq:deltaest}
 \int_{B_\delta} e_\eps(c_\eps(t))\, dx&\le C\delta M^\eps(t)&\quad&\text{ for all }\delta\in (0,1),\\\label{eq:rest}
|\xi^\eps(c_\eps(r,t))+\mu_\eps(r,t) c_\eps(r,t)|&\le C r^{1-d} M^\eps(t)&\quad&\text{ for all }r\in (0,1).
\end{alignat}
Here, we use the notation $r=|x|$ and
\[M^\eps(t):=1+\|\mu_\eps(t)\|_{H^1(\Omega)}+\eps \|\mu_\eps(t)\|_{H^1(\Omega)}^2.\] From \eqref{eq:rest} we deduce that for small $\delta,\eta>0$
\begin{align*}
 \int_\Omega|\xi^\eps(c_\eps(t))|\, dx \le&\int_{B_\delta\cup\{|c_\eps(t)|\ge 1-\eta\}}e_\eps(c_\eps(t))\, dx+ \int_{\Omega\cap\{r>\delta,|c_\eps(t)|< 1-\eta\}}|\mu_\eps(t)|(1-\eta)\, dx\\ 
&+ CM^\eps(t)\int_{\Omega\cap\{r>\delta,|c_\eps(t)|< 1-\eta\}} r^{1-d}\, dx.
\end{align*}
Using \eqref{eq:deltaest} and \eqref{eq:Lem44Estim}, the first integral on the right hand side may be estimated by
\[C\delta M^\eps(t) + C'\eta \E_\eps(t)+C'\eps\|\mu_\eps(t)\|_{L^2(\Omega)}^2.\]
By \eqref{eq:cest2}, the second integral is dominated by
\[\|\mu_\eps(t)\|_{L^2(\Omega)}\,|\{|c_\eps(t)|\ge 1-\eta\}|^{1/2}\le C''(\eta)M^\eps(t)\eps^{1/2}.\]
Finally, using \eqref{eq:Lem44Estim} again, the third integral is smaller than $C'(\eta)M^\eps(t)\delta^{1-d}\eps$. Summing up, we have
\[\int_\Omega|\xi^\eps(c_\eps(t))|\, dx\le C'\eta \E_\eps(t)+C'\eps\|\mu_\eps(t)\|_{L^2(\Omega)}^2+C''(\eta)M^\eps(t)(\eps^{1/2}+\delta^{1-d}\eps+\delta).\]
Integrating this estimate from $0$ to $T$ and choosing $\eta$ small, the first term on the right hand side gets arbitrarily small. Choosing then $\delta=\eps^{1/(2d-2)}$ and $\eps$ small the other two terms get arbitrarily small, too. While this is obvious for second term, concerning the third term we remark that it takes the form
\[C''(\eta)\int_0^T M^\eps(t)\,dt\, (\eps^{1/2}+\eps^{1/(2d-2)})=o(1)\qquad \text{as}\ \eps\to 0\]
due to \eqref{eq:ConMeps}.
\end{proof*}



\section{Nonconvergence}\label{sec:NonConvergence}

In this section we show that solutions of \eqref{eq:NSCH1}-\eqref{eq:NSCH4} do not converge in general to solutions of \eqref{eq:1}-\eqref{eq:4}  if $m_\eps(c)= \tilde{m}\eps^\alpha$ for some $\alpha>3$ or $m_\eps(c)\equiv 0$, which corresponds to the case ``$\alpha=\infty$''. More precisely, we will determine radially symmetric solutions which converge as $\eps\to 0$ to a solution, which does not satisfy \eqref{eq:3}. Moreover, for these solutions the discrepancy measure $\xi_\eps(c_\eps)$ does not vanish in the limit $\eps\to 0$. 

For simplicity of  the following presentation we assume that $\nu(c)\equiv \nu$, $\rho(c)\equiv \rho$.
We will construct radially symmetric solutions of the form 
\begin{equation}
  \label{eq:RadiallySolutions}
\ve(x,t)= u(r,t)\vc{e}_r,\quad p(x,t)= \tilde{p}_\eps(r,t),\quad c(x,t)= \tilde{c}_\eps(r,t),\quad \mu(x,t)= \tilde{\mu}_\eps(r,t),  
\end{equation}
 where $r=|x|, \vc{e}_r=\frac{x}{|x|}$.
If $(\ve,p,c,\mu)$ are of this form, \eqref{eq:NSCH1}-\eqref{eq:NSCH4} reduce to
\begin{eqnarray}\nonumber
  \rho\partial_t u + \rho u\partial_r u & -& \nu \tfrac1{r^{n-1}}\partial_r \left(r^{n-1}\partial_r u\right) \\\label{eq:RotNSCH1}
+ \partial_r \tilde{p}_\eps &=& - \eps \tfrac{n-1}{r}|\partial_r \tilde{c}_\eps|^2 -\eps \partial_r |\partial_r \tilde{c}_\eps|^2\\\label{eq:RotNSCH2}
\partial_r (r^{n-1} u) &=&0\\\label{eq:RotNSCH3}
\partial_t \tilde{c}_\eps + u\partial_r\tilde{c}_\eps &=& m_0\eps^\alpha \tfrac1{r^{n-1}}\partial_r \left(r^{n-1}\partial_r \tilde{\mu}_\eps\right)\\\label{eq:RotNSCH4}
\tilde{\mu}_\eps&=& -\eps  \tfrac1{r^{n-1}}\partial_r \left(r^{n-1}\partial_r \tilde{c}_\eps\right)+\eps^{-1}f'(\tilde{c}_\eps).
\end{eqnarray}
Here we have used 
\begin{eqnarray*}
  -\eps \Div \left(\nabla c\otimes \nabla c\right) &=& -\eps \Div \left(|\partial_r \tilde{c}_\eps|^2\vc{e}_r\otimes \vc{e}_r\right)\\
&=& -\eps (n-1)\tfrac1r |\partial_r \tilde{c}_\eps|^2\vc{e}_r - \eps \partial_r |\partial_r \tilde{c}_\eps|^2 \vc{e}_r
\end{eqnarray*}
since 
\begin{equation*}
  \nabla \vc{e}_r = \frac1r (I-\vc{e}_r\otimes \vc{e}_r),\qquad \Div \vc{e}_r = \tfrac{n-1}r. 
\end{equation*}
    We note that because of \eqref{eq:RotNSCH2}  $u(r,t)\equiv ar^{-n+1}\vc{e}_r$ for some $a\in\R$, which will be determined by the boundary conditions in the following. Hence we can solve \eqref{eq:RotNSCH3}-\eqref{eq:RotNSCH4} together with suitable boundary conditions and $c|_{t=0}= c_{0,\eps}$ independently and use \eqref{eq:RotNSCH1} afterwards to determine $\tilde{p}_\eps$.

\subsection{Nonconvergence in the Case $\alpha=\infty$}
First we consider the case $m_0=0$ (resp. ``$\alpha=\infty$''). In this case we consider \eqref{eq:RotNSCH1}-\eqref{eq:RotNSCH4} in the domain $\Omega = \{x\in \Rn: |x|>1\}$ together with the inflow boundary condition
\begin{alignat}{2}\label{eq:RotBC1}
  u(1,t) &= a &\qquad& \text{for all}\ t>0,\\\label{eq:RotBC2}
  \tilde{c}_\eps(1,t)&= 1&\qquad& \text{for all}\ t>0
\end{alignat}
for some $a>0$
and the initial values 
\begin{equation*}
  (u,\tilde{c})|_{t=0} = \left(\tfrac{a}{r^{n-1}}, c_{0,\eps}\right).
\end{equation*}
Here \eqref{eq:RotNSCH2} and \eqref{eq:RotBC1} already determine $u$ uniquely as
\begin{equation}\label{eq:ExplicitU}
  u(r,t) = \frac{a}{r^{n-1}} \qquad \text{for all}\ r\geq 1, t>0.
\end{equation}
Moreover, we choose 
\begin{equation}\label{eq:RotIV}
  \tilde{c}_{0,\eps}(r)= \theta\left( \frac{r-r_0}\eps \right) \qquad \text{for all}\ r\geq 1
\end{equation}
for some $r_0>1$, where
\begin{equation}
  \label{eq:theta}
 \theta \in C^\infty(\R)\quad \text{ such that } \quad \theta(s) =
 \begin{cases}
   1&\text{if}\ s<-\delta,\\
-1 &\text{if}\ s>\delta
 \end{cases}
\end{equation}
and $\delta\in (0,r_0-1)$ and $\eps \in (0,1]$.
Hence $\tilde{c}$ is a solution of the transport equation
\begin{alignat*}{2}
  \partial_t \tilde{c}_\eps(r,t) + \tfrac{a}{r^{n-1}}\partial_r \tilde{c}_\eps(r,t)&=0 &\qquad& \text{for}\ r>1,t>0,\\
\tilde{c}_\eps(1,t)&= 1 &&\text{for}\ t>0,
\end{alignat*}
which can be calculated with the method of characteristics. The solution for the initial condition above is 
\begin{equation}\label{eq:LimitSolution2}
\tilde{c}_\eps^\infty(r,t):=  \tilde{c}_\eps(r,t)=
  \begin{cases}
c_{0,\eps} (\sqrt[n]{r^n-ant})&\quad \text{if}\ r^n\geq ant,\\
1 &\quad \text{if}\ r^n < ant.
  \end{cases}
\end{equation}
By the construction we have
\begin{equation}
  \tilde{c}_\eps (r,t)\to_{\eps\to 0} 
  \begin{cases}
    -1 & \text{if}\ r>R(t),\\
    1& \text{if}\ r<R(t),
  \end{cases}
\end{equation}
where $R(t)= \sqrt[n]{r_0^n+nat}$ is the radius of the level set $\{c_\eps(x,t)= 0\} = \partial B_{R(t)}(0)$.

In order to determine $\tilde{p}_\eps$ we use that \eqref{eq:RotNSCH1} and \eqref{eq:ExplicitU} imply
\begin{eqnarray}\nonumber
  \partial_r \tilde{p}_\eps &=&  -\eps \tfrac{n-1}r |\partial_r \tilde{c}_\eps|^2 - \eps \partial_r |\partial_r \tilde{c}_\eps|^2 +\nu \tfrac1{r^{n-1}}\partial_r \left(r^{n-1}\partial_r u\right)- \rho u\partial_r u\\\label{eq:Pressure}
&=&   -\eps \tfrac{n-1}r |\partial_r \tilde{c}_\eps|^2 - \eps \partial_r |\partial_r \tilde{c}_\eps|^2 + \partial_r \left(\tfrac{a(n-1)}{2n+2} r^{-2n+2}- \tfrac{\nu a(n-1)}n r^{-n}\right). 
\end{eqnarray}
Now we decompose $\tilde{p}_\eps=p_{1,\eps}+p_{2,\eps}+p_3$ such that
\begin{equation*}
  \partial_r p_{1,\eps}(r) =  -\eps (n-1)\tfrac1r |\partial_r \tilde{c}_\eps(r)|^2,\qquad p_{2,\eps}(r)=- \eps  |\partial_r \tilde{c}_\eps(r)|^2\quad \text{for all}\ r>1.
\end{equation*}
Hence up to a constant
\begin{equation*}
  p_3= \frac{a(n-1)}{2n+2} r^{-2n+2}- \frac{\nu a(n-1)}n r^{-n}.
\end{equation*}
Because of the explicit form of $\tilde{p}_\eps$ and 
\begin{equation}\label{eq:DerivLimitSol}
  \partial_r \tilde{c}_\eps(r,t)= -\frac1\eps \theta'\left(\frac{\sqrt[n]{r^n-ant}-r_0}\eps\right) r^{n-1}(r^n-ant)^{\frac1n-1},
\end{equation}
it is easy to observe that
\begin{equation*}
  \tilde{p}_\eps(r)\to_{\eps\to 0} \tilde{p}_0 (r) \qquad \text{for all}\ r\neq R(t)
\end{equation*}
for some smooth $\tilde{p}_0\colon (1,M)\setminus \{R(t)\}\to \R$.
Now we consider
\begin{equation*}
  [p_{j,\eps}]_{R(t),\delta} := p_j(R(t)+\delta)-p_j(R(t)-\delta)= \int_{R(t)-\delta}^{R(t)+\delta} \partial_r p_j(s,t) \sd s,
\end{equation*}
which converges as $\eps\to 0$ and $\delta \to 0$ (in that order) to several contributions of $[\tilde{p}_0]$ at $R(t)$. 
For $j=2$ we have that
\begin{equation*}
  [p_{2,\eps}]_{R(t),\delta} = -\eps |\partial_r \tilde{c}_\eps(R(t)+\delta)|^2+ \eps |\partial_r \tilde{c}_\eps(R(t)-\delta)|^2 =0 
\end{equation*}
if $\eps <\delta$. Hence
\begin{equation*}
  \lim_{\eps\to 0} [p_{2,\eps}]_{R(t),\delta}=0.
\end{equation*}
Moreover, since $p_3$ is independent of $\eps$ and continuous, we have 
\begin{equation*}
  \lim_{\delta \to 0} \lim_{\eps\to 0}[p_3]_{R(t),\delta} =0.
\end{equation*}
Finally, using \eqref{eq:DerivLimitSol} we obtain
\begin{eqnarray}\nonumber
 [\tilde{p}_0]_{R(t),\delta}&=& \lim_{\eps\to 0} [\tilde{p}_\eps]_{R(t),\delta}=\lim_{\eps\to 0} [p_{1,\eps}]_{R(t),\delta}\\\nonumber
&=& \tfrac{n-1}\eps \int_{R(t)-\delta}^{R(t)+\delta}\left|\theta'\left(\frac{\sqrt[n]{r^n-ant}-r_0}\eps\right)\right|^2 r^{2n-3}(r^n-ant)^{\frac2n-2} \sd r\\\nonumber
&=&\left.\sigma (n-1) r^{2n-3}(r^n-ant)^{\frac2n-2}\right|_{r=R(t)}\\\label{eq:LimitSolution}
&=&\sigma\left(\frac{R(t)}{r_0}\right)^{2n-2}\frac{n-1}{R(t)},
\end{eqnarray}
where $\sigma:=\int_{\R}|\theta'(s)|^2 \sd s$. Here $R(t)>r_0$ for all $t>0$ and $R(t)\to_{t\to \infty} \infty$. The exact solution of the classical sharp interface model, i.e., \eqref{eq:1}-\eqref{eq:6} with $m_0=0$ and $\Omega^+_0= B_{r_0}(0)\setminus \ol{B_1(0)}$, $\ve_0= ar^{1-n}\tfrac{x}{|x|}$, is given by 
\begin{equation*}
  {\ve} (x,t)= ar^{1-n} \frac{x}{|x|},\quad
 \Omega^+(t) = B_{R(t)} (0)\setminus \ol{B_1(0)},
\end{equation*}
where $R(t)=\sqrt[n]{r_0^n+ant}$ as before, ${p}\colon \Omega\times (0,T)\to\R$ is constant in $\Omega^\pm(t)$ such that
\begin{equation}\label{eq:ExactSolution}
   [{p}](x,t) = \sigma\frac{n-1}{R(t)} \quad \text{on}\ \partial \Omega^+(t)=\Gamma(t).  
\end{equation}
Hence the pressure $\tilde{p}$ of the limit solution as $\eps\to 0$ differs from the solution of the sharp interface \eqref{eq:ExactSolution} by a time dependent factor $\left(\frac{R(t)}{r_0}\right)^{2n-2}>1$, which corresponds to an increased surface tension coefficient that even increases strictly in time. 

\begin{rem}
  From the explicit solution \eqref{eq:LimitSolution2} one observes 
  \begin{equation*}
    \partial_r \tilde{c}_\eps(R(t),t)=-\frac1\eps \theta'(0) \left(\frac{R(t)}{r_0}\right)^{n-1}.
  \end{equation*}
Hence  $|\partial_r \tilde{c}|$ increases at the diffuse interface ``$r\approx R(t)$'' as $t$ increases, cf. Figure~\ref{fig:1}
\begin{figure}[h]
  \centering
  \includegraphics[width=9cm, height=6cm]{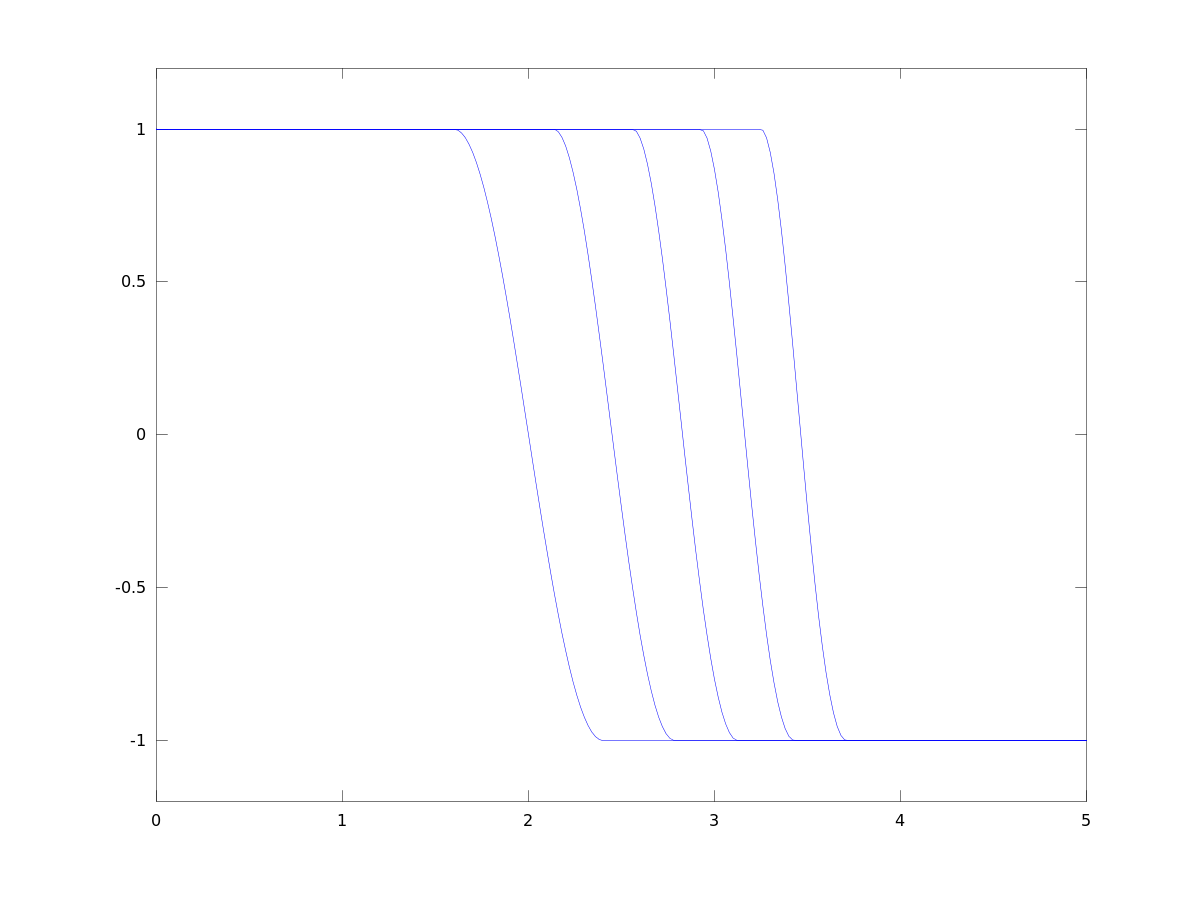}
  \caption{Plot of $\tilde{c}_\eps^\infty$ for $t=0,1,2,3,4$ (from left to right) with $a=1,\eps=0.4, r_0=2, n=2$.}
  \label{fig:1}
\end{figure}
\end{rem}
Finally, we determine the limit of the discrepancy measure:
\begin{eqnarray*}
  \int_\Omega\xi_\eps (c^\infty_\eps) \varphi\sd x   &=& \int_1^M \left( \eps\frac{|\partial_r \tilde{c}_\eps^\infty(r)|^2}2 - \frac{f(\tilde{c}_\eps^\infty(r))}\eps\right) \tilde{\varphi}(r)\, r^{n-1}d r \\
&=& \int_1^M \frac1\eps\left|\theta'\left(\frac{\sqrt[n]{r^n-ant}-r_0}\eps\right)\right|^2\left(\frac{r^{n-1}}{(r^n-ant)^{1-\frac1n}}\right) \tilde{\varphi}(r)\, r^{n-1}d r \\
&& -\int_1^M \frac1\eps f\left(\theta\left(\frac{\sqrt[n]{r^n-ant}-r_0}\eps\right)\right) \tilde{\varphi}(r)\, r^{n-1}d r \\
&\to_{\eps\to 0}&  \left(\sigma\kappa(t)-\tilde{\sigma}\right) \int_{\partial B_{R(t)}} \varphi(x)\sd x
\end{eqnarray*}
for all $\varphi \in C_0^\infty(\Omega)$ where  $\tilde{\sigma}:= \int_{\R} f(\theta(s))\sd s$, $\sigma= \int_{\R}|\theta'(s)|^2\sd s$ as before, and
\begin{equation*}
  \tilde{\varphi}(r)= \int_{\partial B_r(0)} \varphi(x)\sd x\qquad\text{for all}\ r\in (1,M).
\end{equation*}
Hence
\begin{equation}\label{eq:xiLimit}
  \xi_\eps (c^\infty_\eps)\to_{\eps\to 0} (\sigma\kappa(t)-\tilde{\sigma}) \delta_{\partial B_{R(t)}} \qquad \text{in}\ \mathcal{D}'(\Omega)
\end{equation}
since $\kappa(t)$ is strictly increasing in $t>0$, we have $\sigma\kappa(t)-\tilde{\sigma}\neq 0$ for all $t>0$  except possibly one.

\subsection{Nonconvergence in the Case $3<\alpha<\infty$} 
Based on the solution for the extreme case ``$\alpha=\infty$'' from the previous section, we will prove essentially the same result in the case $3<\alpha<\infty$. In order to avoid technical difficulties with the unboundedness  of $\{x\in \R^n:|x|>1\}$, we will consider \eqref{eq:RotNSCH1}-\eqref{eq:RotNSCH4} in 
\begin{equation*}
\Omega_M= \{x\in \Rn: 1<|x|<M\},  
\end{equation*}
 where $M>r_0>1$ is arbitrary, together with 
\begin{alignat}{3}\label{eq:RotBC1'}
u_\eps(1,t)&= a, &\qquad c_\eps (1,t)&= 1\qquad &\text{for all } t\in (0,T),  
\\
\label{eq:RotBC3}
u_\eps(M,t)&= \frac{a}{M^{n-1}}, &\qquad c_\eps (M,t)&= -1\qquad &\text{for all } t\in (0,T).  
\end{alignat}
\begin{defn}[{\bf Weak Solutions}]~\\
Let
\begin{eqnarray*}
  H^1_{(0)}&=& \left\{u\in C^0([1,M]): r^{(n-1)/2}\partial_r u\in L^2(1,M), \int_1^M u(r) r^{n-1}\sd r=0 \right\}
\end{eqnarray*}
be equipped with the inner product
\begin{equation*}
  (u,v)_{H^1_{(0)}} := \int_1^\infty \partial_r u(r)\partial_rv(r) r^{n-1} \sd r
\end{equation*}
for all $u,v\in H^1_{(0)}$.
 We embed $L^2_{(0)}(1,M)\hookrightarrow H^{-1}_{(0)}:= (H^1_{(0)})'$ by identifying $u\in L^2(1,M)$ with
\begin{equation*}
  \weight{u,\varphi}_{H^{-1}_{(0)},H^1_{(0)}}:= \int_1^M u(r) \varphi(r) r^{n-1}\sd r\qquad \text{for all}\ \varphi \in H^1_{(0)}. 
\end{equation*}
We call $(\tilde{c}_\eps,\tilde{\mu}_\eps)$ a weak solution of \eqref{eq:RotNSCH3}-\eqref{eq:RotNSCH4} together with \eqref{eq:RotBC1'},\eqref{eq:RotBC3} and $\tilde{c}_\eps|_{t=0}= \tilde{c}_{0,\eps}$ if
    \begin{alignat*}{1}
      \tilde{c}_\eps-\chi &\in C([0,T];H^1_0(1,M))\cap L^2(0,T;H^3(1,M)),\quad\\
 \partial_t \tilde{c}_\eps &\in L^2(0,T;H^{-1}_{(0)}(1,M)), \quad
\tilde{\mu}_\eps \in L^2(0,T;H^1(1,M)),
    \end{alignat*}
where $\chi \in C^\infty([1,M])$ with $\chi(1)=1, \chi(M)=-1$ and
\begin{equation*}
  \weight{\partial_t \tilde{c}_\eps(t),\varphi }_{H^{-1}_{(0)},H^1_{(0)}} + \int_1^M ar^{-n+1} \partial_r \tilde{c}_\eps(r,t) \varphi(r)\, r^{n-1} \sd r  = -m_0\eps^\alpha\int_1^M \partial_r \mu_\eps\partial_r \varphi \, r^{n-1}\sd r    
\end{equation*}
for almost every $t\in (0,T)$ and for all $\varphi \in H^1_{(0)}(1,M)$, \eqref{eq:RotNSCH4} is satisfied pointwise almost everywhere, and $c|_{t=0}= c_{0,\eps}$ in $H^1(1,M)$.
\end{defn}
Existence of weak  solutions can be proved by standard methods. E.g. it follows from \cite[Theorem 3.1]{AsymptoticCH} applied to $H_1= H^1_{(0)}$, $H_0= H^{-1}_{(0)}$,
\begin{eqnarray*}
  \varphi(u) &=& \int_1^M \left(\eps\frac{|\partial_r u(r)|^2}2 + \eps^{-1} f_0(u(r))\right)\, r^{n-1}\sd r \\
\weight{\mathcal{B}(v),w}_{H^{-1}_{(0)},H^1_{(0)}} &=& m_0\eps^{\alpha-1} \beta\int_1^M \partial_r v(r)\partial_rw(r) r^{n-1}\sd r -\int_1^Ma\partial_rv(r) w(r)\sd r    
\end{eqnarray*}
for all $v,w\in H_1$ and $u\in \operatorname{dom}(\varphi):= \{v\in H^1(1,M):\tfrac1{M-1} \int_1^M v(r) r^{n-1}\sd r =m\}$, where $m:= \int_1^M \tilde{c}_{0,\eps}(r)r^{n-1} dr$, $f_0(s):= f(s) -\tfrac{\beta}2 s^2$ for all $s\in\R$ and $\beta:= \inf_{s\in\R} f''(s)$. Then $f_0$ and $\varphi$ are convex and the subgradient $\mathcal{A}=\partial\varphi$ taken with respect to $H^{-1}_{(0)}$ satisfies
\begin{eqnarray*}
\lefteqn{\weight{\mathcal{A}(u),w}_{H^{-1}_{(0)},H^1_{(0)}}}\\
 &=& m_0\eps^{\alpha+1}\int_1^M \partial_r(r^{-n+1}\partial_r (r^{n-1}\partial_r u))\partial_rw\, r^{n-1}\sd r+ m_0\eps^{\alpha-1}\int_1^M f_0'(u(r))  r^{n-1}\sd r    
\end{eqnarray*}
for all $u\in \mathcal{D}(\partial \varphi)= \{v\in H^3(1,M): \tfrac1{M-1}\int_1^M v(r) r^{n-1}\sd r =m\}$. Then it is easy to verify that all conditions of \cite[Theorem 3.1]{AsymptoticCH} are satisfied.

Finally, if $(\tilde{c}_\eps,\mu_\eps)$ is a weak solution as above, we can choose $\varphi = \mu_\eps -\bar{\mu}_\eps$ with $\bar{\mu}_\eps= \int_1^M \mu_\eps (r)r^{n-1}\sd r$ in the weak formulation of the convective Cahn-Hilliard equation and obtain the energy identity
\begin{eqnarray}\label{eq:EnergyId}
 \lefteqn{ \int_1^M \left(\eps|\partial_r \tilde{c}_\eps(r,t)|^2 + \eps^{-1} f(\tilde{c}_\eps(r,t))\right)\, r^{n-1}\sd r}\\\nonumber
&& + \int_0^t \int_1^M m_0\eps^\alpha|\nabla\mu_\eps(r,\tau)|^2 r^{n-1}\sd r \sd \tau
=   \int_1^M \left(\eps|\partial_r \tilde{c}_{0,\eps}(r)|^2 + \eps^{-1} f(\tilde{c}_{0,\eps}(r))\right)\, r^{n-1}\sd r
\end{eqnarray}
for all $t\in (0,T)$.
  \begin{thm}
    Let $\kappa>3$, $r_0\in (1,M)$, $0<\delta<\min (r_0-1,M-r_0)$ and $T>0$ such that $R(T)=\sqrt[n]{r_0^n+na T}<M-\delta$, $\Omega= \{x\in\R^n: 1<|x|<M\}$, $\tilde{c}_{0,\eps}$ and $\theta$ be as in \eqref{eq:RotIV}-\eqref{eq:theta}, and let $(\ve_\eps,p_\eps,c_\eps,\mu_\eps)$ be the radially symmetric solutions of the form \eqref{eq:RadiallySolutions} 
of \eqref{eq:NSCH1}-\eqref{eq:NSCH5}, \eqref{eq:NSCH7} and boundary conditions \eqref{eq:RotBC1}-\eqref{eq:RotBC2}, $\no\cdot \nabla \mu_\eps|_{\partial\Omega}=0$. Then 
\begin{equation*}
\ve_\eps \equiv ar^{-n+1}\vc{e}_r 
\end{equation*}
 and 
    \begin{alignat*}{2}
      c_\eps&\to_{\eps\to 0} 2\chi_{B_{R(t)}(0)}-1 &\qquad&\text{for every}\ x\in \Omega\setminus\partial B_{R(t)}(0),t\in (0,T),\\
      p_\eps&\to_{\eps\to 0} p &\qquad& \text{in}\ \mathcal{D}'(\Omega\times(0,T)),
    \end{alignat*}
    where $R(t)= \sqrt[n]{r_0^n+nat}$, $p\in \mathcal{D}'(\Omega\times (0,T))$ coincides with a function that is continuous in $x\in\Omega\setminus \partial B_{R(t)}(0)$ for every $t\in (0,T)$, and
    \begin{equation*}
      [p] = {\kappa(t)\sigma H}\qquad \text{on}\ \Gamma(t)= \partial B_{R(t)}(0)\ \text{for all}\ t\in (0,T)  
    \end{equation*}
    and {$1<\kappa(t)=\left(\frac{R(t)}{r_0}\right)^{2n-2}\to_{t\to\infty} \infty$}. Moreover, 
    \begin{equation}\label{eq:xiConv}
      \xi_\eps(c_\eps)= \frac{\eps|\nabla c_\eps|^2}2 - \frac{f(c_\eps)}\eps \to_{\eps\to 0}(\sigma\kappa(t)-\tilde{\sigma}) \delta_{\partial B_{R(t)}} \qquad \text{in}\ \mathcal{D}'(\Omega\times (0,T)), 
    \end{equation}
where $\sigma= \int_{\R} |\theta'(s)|^2\sd s, \tilde{\sigma}= \int_{\R} f(\theta(s))\sd s$.
  \end{thm}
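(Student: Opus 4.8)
The plan is to exploit that the velocity decouples completely. By \eqref{eq:RotNSCH2} and the boundary conditions \eqref{eq:RotBC1'}--\eqref{eq:RotBC3} one has $r^{n-1}u_\eps\equiv a$, hence $\ve_\eps=ar^{1-n}\vc{e}_r$ for every $\eps$, independent of $t$; this is the first assertion. The pair $(\tilde c_\eps,\tilde\mu_\eps)$ then solves the convective Cahn--Hilliard system \eqref{eq:RotNSCH3}--\eqref{eq:RotNSCH4} with the fixed drift $u=ar^{1-n}$, and, once the behaviour of $\tilde c_\eps$ and of the concentrating measure $\eps|\partial_r\tilde c_\eps|^2\,r^{n-1}\sd r$ is understood, the pressure is recovered from \eqref{eq:RotNSCH1} by integration in $r$ and the discrepancy directly from its definition. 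The whole argument is thus a perturbation of the \emph{explicit} transport solution $\tilde c_\eps^\infty$ of \eqref{eq:LimitSolution2}: I aim to show that the genuine solution stays so close to $\tilde c_\eps^\infty$ that all the limits computed for $\tilde c_\eps^\infty$ in the case $\alpha=\infty$, in particular \eqref{eq:LimitSolution} and \eqref{eq:xiLimit}, transfer verbatim.

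Second, I record the a priori bounds. The initial energy $\E_\eps(\tilde c_{0,\eps})$ is uniformly bounded (by a one-dimensional Modica--Mortola computation it converges to a finite multiple of $r_0^{n-1}$). The energy identity \eqref{eq:EnergyId} --- read together with the work supplied by the inflow/outflow data \eqref{eq:RotBC1'}--\eqref{eq:RotBC3} --- then bounds $\E_\eps(\tilde c_\eps(t))$ uniformly on $[0,T]$ (a Gronwall argument, since that work is itself controlled by the energy) and bounds the dissipation $\int_0^T\!\!\int_1^M m_0\eps^\alpha|\partial_r\tilde\mu_\eps|^2\,r^{n-1}\sd r\,\sd t\le C$. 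In particular $\|m_0\eps^\alpha\partial_r\tilde\mu_\eps\|_{L^2((0,T)\times(1,M);\,r^{n-1}\sd r)}\le C\eps^{\alpha/2}\to0$, which is the quantitative reason the limiting dynamics is pure transport.

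Third is the comparison estimate, the clean core of the proof. Because $R(T)<M-\delta$, for small $\eps$ the explicit profile $\tilde c_\eps^\infty$ equals $1$ near $r=1$ and $-1$ near $r=M$, so $g_\eps:=\tilde c_\eps-\tilde c_\eps^\infty$ vanishes on $\{r=1\}\cup\{r=M\}$ and at $t=0$. Subtracting the transport equation satisfied by $\tilde c_\eps^\infty$ from the weak form of \eqref{eq:RotNSCH3} gives $\partial_t g_\eps+u\partial_r g_\eps=m_0\eps^\alpha\,\tfrac1{r^{n-1}}\partial_r(r^{n-1}\partial_r\tilde\mu_\eps)$. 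Testing with $g_\eps$ in $L^2(r^{n-1}\sd r)$, the drift term vanishes since $r^{n-1}u\equiv a$ is constant and $g_\eps$ vanishes at the endpoints, while the source integrates by parts to $-m_0\eps^\alpha\int_1^M r^{n-1}\partial_r\tilde\mu_\eps\,\partial_r g_\eps\sd r$. Estimating by Cauchy--Schwarz in space--time, using the flux bound above and the elementary energy bound $\|\partial_r g_\eps\|_{L^2(r^{n-1}\sd r)}\le C\eps^{-1/2}$, one obtains $\|g_\eps\|_{L^\infty(0,T;L^2)}^2\le C\eps^{(\alpha-1)/2}$. Interpolating with the same $H^1$-bound through the one-dimensional Agmon inequality $\|g_\eps\|_{L^\infty}\le C\|g_\eps\|_{L^2}^{1/2}\|\partial_r g_\eps\|_{L^2}^{1/2}$ yields $\|g_\eps\|_{L^\infty((0,T)\times(1,M))}\le C\eps^{(\alpha-3)/8}$, which tends to $0$ \emph{precisely because} $\alpha>3$. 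This already gives the pointwise convergence $c_\eps\to 2\chi_{B_{R(t)}}-1$ off $\Gamma(t)$.

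The delicate step --- the one I expect to be the main obstacle --- is to upgrade this to convergence of the concentrating gradient measure $\eps|\partial_r\tilde c_\eps|^2\,r^{n-1}\sd r$ and of the discrepancy $\xi^\eps(c_\eps)$ to the same limits as for $\tilde c_\eps^\infty$, since uniform closeness of $c_\eps$ to $\tilde c_\eps^\infty$ does \emph{not} by itself control gradients. My plan is to prove that the dissipation is in fact asymptotically negligible, $\int_0^T\!\!\int_1^M m_0\eps^\alpha|\partial_r\tilde\mu_\eps|^2\,r^{n-1}\to0$: differentiating \eqref{eq:RotNSCH4} gives $\partial_r\tilde\mu_\eps=-\eps\,\partial_r\big(\tfrac1{r^{n-1}}\partial_r(r^{n-1}\partial_r\tilde c_\eps)\big)+\eps^{-1}f''(\tilde c_\eps)\partial_r\tilde c_\eps$, and the interfacial scaling $\|\partial_r\tilde\mu_\eps\|_{L^2}=O(\eps^{-3/2})$ --- which has to be justified from the $H^3$-regularity of the weak solution while carefully tracking the powers of $\eps$ --- makes the dissipation $O(\eps^{\alpha-3})$, again vanishing exactly for $\alpha>3$. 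Once the dissipation is gone, the energy balances for $c_\eps$ and for $\tilde c_\eps^\infty$ differ only through their boundary-work terms, which coincide in the limit because $c_\eps\to\tilde c_\eps^\infty$; this is where the gradient information re-enters and is the genuinely subtle point. It forces the two gradient energies to agree, and since all concentration sits on $\partial B_{R(t)}$ --- where the two profiles coincide --- the measures $\eps|\partial_r c_\eps|^2$ and $\xi^\eps(c_\eps)$ acquire the same limits as for $\tilde c_\eps^\infty$ (the uniform convergence of the previous paragraph simultaneously pins the potential part $\eps^{-1}f(c_\eps)$). Substituting into \eqref{eq:RotNSCH1} and integrating across $\Gamma(t)=\partial B_{R(t)}$ exactly as in the case $\alpha=\infty$ --- the $p_{2,\eps}$- and $p_3$-parts having vanishing jump and the $p_{1,\eps}$-part producing the curvature contribution --- then gives $p_\eps\to p$ in $\D'(\Omega\times(0,T))$ with $[p]=\kappa(t)\sigma H$ on $\Gamma(t)$, together with the convergence \eqref{eq:xiConv} of the discrepancy.
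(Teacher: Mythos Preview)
Your argument up through the $L^2$ and $L^\infty$ bounds on the difference $g_\eps=\tilde c_\eps-\tilde c_\eps^\infty$ is fine, but the step you yourself flag as ``the genuinely subtle point'' --- upgrading to control of $\eps|\partial_r g_\eps|^2$ --- is not actually carried out, and the route you sketch does not work. The a priori bound $\|\partial_r\tilde\mu_\eps\|_{L^2}=O(\eps^{-3/2})$ is the scaling of the \emph{explicit} profile; for the genuine solution it would require $\eps$-uniform $H^3$ bounds on $\tilde c_\eps$ that are nowhere available from the energy identity. Even if the dissipation were $o(1)$, your ``energy balances match'' argument fails: by \eqref{eq:EnergyId} the energy $\E_\eps(\tilde c_\eps(t))$ is non-increasing, whereas $\E_\eps(\tilde c_\eps^\infty(t))$ \emph{increases} in $t$ because the transported profile steepens (cf.\ \eqref{eq:DerivLimitSol}), so the two energies do not agree. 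And even equality of total energies would not force $\eps\|\partial_r g_\eps\|_{L^2}^2\to 0$ without separate control of the cross term $\eps(\partial_r\tilde c_\eps^\infty,\partial_r g_\eps)$.

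The paper closes this gap by a different and more direct device: rather than keeping $\tilde\mu_\eps$ as an unknown constrained only by the dissipation bound, it substitutes \eqref{eq:RotNSCH4} into \eqref{eq:RotNSCH3} to obtain a genuine fourth-order equation for the difference $d_\eps:=\tilde c_\eps-\tilde c_\eps^\infty$,
\[
\weight{\partial_t d_\eps,\varphi}+m_0\eps^{\alpha+1}\!\int\tilde\Delta d_\eps\,\tilde\Delta\varphi\,r^{n-1}\sd r=\int h_\eps\,\tilde\Delta\varphi\,r^{n-1}\sd r,
\]
with explicit source $h_\eps=m_0\eps^{\alpha+1}\tilde\Delta\tilde c_\eps^\infty+m_0\eps^{\alpha-1}f'(\tilde c_\eps)$ satisfying $\|h_\eps\|_{L^2}\le C\eps^{\alpha-1/2}$ (here one first shows $\|\tilde c_\eps\|_{L^\infty}\le\tilde M$ via $W(\tilde c_\eps)$ and uses $|f'|^2\le C(\tilde M)f$). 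Testing with $\varphi=d_\eps$ yields simultaneously $\sup_t\|d_\eps\|_{L^2}^2\le C\eps^{\alpha-2}$ and $m_0\eps^{\alpha+1}\int_0^T\|\tilde\Delta d_\eps\|_{L^2}^2\sd t\le C\eps^{\alpha-2}$, and the interpolation $\|\partial_r d_\eps\|_{L^2}^2\le C\|d_\eps\|_{L^2}\|\tilde\Delta d_\eps\|_{L^2}$ then gives
\[
\int_0^T\eps\|\partial_r d_\eps\|_{L^2}^2\sd t\le C\,\eps^{(\alpha-3)/2}\to 0,
\]
which is precisely the gradient control needed to transfer the pressure computation \eqref{eq:LimitSolution} and the discrepancy limit \eqref{eq:xiLimit} from the case $\alpha=\infty$. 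Your first-order testing cannot produce this $H^2$ information; the fourth-order structure is essential.
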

  \begin{proof}
First of all, we show that $\|\tilde{c}_\eps\|_{L^\infty((1,M)\times (0,T))}$ is uniformly bounded. To this end let $W(c)$ be as in Section~\ref{sec:SharpInterfaceLimit}. Then as in \eqref{eq:NablaWepsEstim}
\begin{eqnarray*}
  \int_1^M|\partial_r W(\tilde{c}_{\eps}(r,t))| r^{n-1}\sd r &\leq& C \int_1^M\left(\eps \frac{|\partial_r \tilde{c}_{0,\eps}(r)|^2}2 + \frac{f(\tilde{c}_{0,\eps}(r))}\eps\right) r^{n-1}\sd r\leq C'
\end{eqnarray*}
by \eqref{eq:EnergyId} and the choice of the initial data. 
Hence
\begin{equation*}
  \sup_{0<t<T, 0<\eps <1}\|W(\tilde{c}_\eps(t))\|_{L^\infty(1,M)}\leq \sup_{0<t<T,0<\eps<1}\|\partial_r W(\tilde{c}_\eps(t))\|_{L^1(1,M)}\leq C'
\end{equation*}
due to $\tilde{c}_\eps (t,M)=1$,
which implies
\begin{equation}\label{eq:LinftyEstim}
\sup_{0<\eps<1}  \|\tilde{c}_\eps\|_{L^\infty((1,M)\times (0,T))} \leq \tilde{M}
\end{equation}
for some $\tilde{M}>0$
due to \eqref{eq:WEstim}.
    Now let $d_\eps := \tilde{c}_\eps-\tilde{c}_\eps^\infty$, where $\tilde{c}_\eps^\infty$ is as in \eqref{eq:LimitSolution2}.
First we will show 
\begin{equation*}
  \int_0^T\eps\|\partial_r d_\eps(t)\|_{L^2}^2 \sd t \to_{\eps\to 0}0,
\end{equation*}
where $L^2= L^2((1,M);r^{n-1}\sd r)$.
To this end we use that
\begin{equation}\label{eq:DiffEq}
  \weight{\partial_t d_\eps(t),\varphi} + m_0\eps^{\alpha+1}\int_1^M \tilde{\Delta} d_\eps(r,t) \tilde{\Delta}\varphi(r)\, r^{n-1}\sd r = \int_1^M g_\eps(r,t) \tilde{\Delta} \varphi(r)\, r^{n-1}\sd r
\end{equation}
for all $\varphi \in H^1_{(0)}$ and almost every $t\in (0,T)$,
where 
\begin{equation*}
\tilde{\Delta} u(r) = r^{-n+1}\partial_r (r^{n-1}\partial_r u(r))\qquad \text{for all}\ u\in H^2(1,M) 
\end{equation*}
and 
\begin{equation*}
  g_\eps(r,t) = m_0\eps^{\alpha+1}\tilde{\Delta} c_\eps^\infty(r,t)+m_0\eps^{\alpha-1}f'(\tilde{c}_\eps(r,t)). 
\end{equation*}
Moreover,
\begin{equation*}
  \|g_\eps(t)\|_{L^2}\leq m_0\eps^{\alpha+1} \|\tilde{\Delta}\tilde{c}_\eps^\infty(t)\|_{L^2} +m_0\eps^{\alpha-1}\|f'(\tilde{c}_\eps(t))\|_{L^2}\leq C(T)m_0\eps^{\alpha-\frac12}
\end{equation*}
where $L^2=L^2(1,M;r^{n-1}dr)$ and we have used that
\begin{equation*}
  \|\tilde{c}_\eps^\infty(t)\|_{H^2(1,M)}\leq C(T)\eps^{-\frac32} 
\end{equation*}
and 
\begin{equation*}
  \eps^{-1}\|f'(c_\eps(t)\|_{L^2}^2 \leq C E_\eps(c_\eps(t))\leq C'
\end{equation*}
due to $|f'(s)|^2\leq C(\tilde{M})f(s)$ for all $s\in [-\tilde{M},\tilde{M}]$ and \eqref{eq:LinftyEstim}.
Hence, choosing $\varphi=d_\eps(t)$ in \eqref{eq:DiffEq} and integrating in time, we conclude
\begin{eqnarray}\nonumber
  \lefteqn{\sup_{0\leq t\leq T} \|d_\eps(t)\|_{L^2}^2 + m_0\eps^{\alpha+1}\int_0^{T}\|\tilde{\Delta} d_\eps(t)\|_{L^2}^2 \sd t}\\\label{eq:DiffConv}
 &\leq& \int_0^{T} \eps^{-\frac{\alpha+1}2}\|g_\eps(t)\|_{L^2}\eps^{\frac{\alpha+1}2}\|\tilde{\Delta} d_\eps(t)\|_{L^2}\sd t.
\end{eqnarray}
Using the Cauchy-Schwarz and Young's inequality, we obtain
\begin{equation*}
{  \sup_{0\leq t\leq T} \|d_\eps(t)\|_{L^2}^2 + m_0\eps^{\alpha+1}\int_0^{T}\|\tilde{\Delta} d_\eps(t)\|_{L^2}^2 \sd t}
\leq C\int_0^{T} \eps^{-\alpha-1}\|g_\eps(t)\|_{L^2}^2\sd t\leq C(T) \eps^{\alpha-2} .
\end{equation*}
Combining this estimate with
\begin{equation*}
  \|\partial_r v\|_{L^2}^2 \leq C\|v\|_{L^2}\|v\|_{H^2}\leq C'\|v\|_{L^2}\|\tilde{\Delta}v\|_{L^2}\quad \text{for all}\ v\in H^2(1,M)\cap H^1_0(1,M), 
\end{equation*}
we conclude
\begin{equation}\label{eq:dEpsConv}
  \int_0^T\eps\|\partial_r d_\eps(t)\|_{L^2}^2 \sd t \leq C \eps^{\frac{\alpha-1}2}\eps^{-1}= C\eps^{\frac{\alpha-3}2}\to_{\eps\to 0}0
\end{equation}
since $\alpha>3$. 

In order to determine $\tilde{p}_\eps$ we use again \eqref{eq:Pressure}
and decompose $\tilde{p}_\eps=p_{1,\eps}+p_{2,\eps}+p_3$ similarly as before, where
\begin{equation*}
  \partial_r p_{1,\eps} =  -\eps (n-1)\tfrac1r |\partial_r \tilde{c}_\eps|^2,\qquad p_{2,\eps}=- \eps \partial_r |\partial_r \tilde{c}_\eps|^2.
\end{equation*}
Hence up to a constant
\begin{equation*}
  p_3= \frac{a(n-1)}{2n+2} r^{-2n+2}- \frac{\nu a(n-1)}n r^{-n}
\end{equation*}
as before.
Moreover, let
\begin{equation*}
  \partial_r p_{1,\eps}^\infty =  -\eps (n-1)\tfrac1r |\partial_r \tilde{c}_\eps^\infty|^2,\qquad p_{2,\eps}^\infty=- \eps \partial_r |\partial_r \tilde{c}_\eps^\infty|^2
\end{equation*}
be the corresponding parts of the pressure for the case $\alpha=\infty$. Then \eqref{eq:dEpsConv} implies that
\begin{alignat*}{2}
  \partial_rp_{1,\eps} - \partial_rp_{1,\eps}^\infty &\to_{\eps\to 0} 0 &\quad& \text{in}\ L^1((1,M)\times (0,T)),\\
  \partial_rp_{2,\eps} - \partial_rp_{2,\eps}^\infty &\to_{\eps\to 0} 0 &\qquad& \text{in}\ \mathcal{D}'((1,M)\times (0,T)).
\end{alignat*}
Since $p_3$ is independent of $\eps$ and the same as in the case $\alpha=\infty$, we conclude that
\begin{equation*}
  \tilde{p}_0:=\lim_{\eps\to 0} \tilde{p}_\eps  =  \lim_{\eps\to 0} \tilde{p}_\eps^\infty = p_0^\infty \qquad \text{in}\ \mathcal{D}'((1,M)\times (0,T)),
\end{equation*}
where $\tilde{p}_\eps^\infty$ is the pressure in the case $\alpha =\infty$ and $\tilde{p}_0^\infty$ is its limit as $\eps\to 0$.
Therefore 
\begin{eqnarray*}
 [\tilde{p}_0]&=&\sigma\left(\frac{R(t)}{r_0}\right)^{2n-2}\frac{n-1}{R(t)},
\end{eqnarray*}
where $\sigma:=\int_{\R}|\theta'(s)|^2 \sd s$, by the result of the previous section.

Finally, it remains to prove \eqref{eq:xiConv}. First of all, because of \eqref{eq:dEpsConv}, we conclude
\begin{equation*}
  \lim_{\eps\to 0}\left(\eps\frac{|\nabla c_\eps^\infty|^2}2- \eps\frac{|\nabla c_\eps|^2}2 \right) = 0 \qquad \text{in}\ L^1(\Omega\times(0,T)),
\end{equation*}
where $c_\eps^\infty(x,t)= \tilde{c}_\eps^\infty(|x|,t)$.
Moreover, using \eqref{eq:LinftyEstim} and a Taylor expansion of $f(\tilde{c}_\eps(r,t)))$ around $\tilde{c}_\eps^\infty(r,t)$, we conclude
\begin{eqnarray*}
  \lefteqn{\int_0^T\int_1^M\left|\frac{f(\tilde{c}_\eps^\infty(r,t))}\eps-\frac{f(\tilde{c}_\eps(r,t))}\eps\right|\sd r\sd t}\\
&\leq & \int_0^T\int_1^M\left|\frac{f'(\tilde{c}_\eps^\infty(r,t))d_\eps(r,t)}\eps\right|\sd r\sd t +C\int_0^T\int_1^M\left|\frac{d_\eps(r,t)^2}{\eps}\right|\sd r\sd t \\
&\leq & C(M,T)\eps^{-1}\left( \|d_\eps\|_{L^2(\Omega\times(0,T))}^2+ \|d_\eps\|_{L^2(\Omega\times(0,T))}\|f'(\tilde{c}_\eps^\infty)\|_{L^2(\Omega\times (0,T))}\right)\\
&\leq& C'(M,T) \left(\eps^{\alpha-3} + \eps^{\frac{\alpha-3}2}\right)\to_{\eps\to 0} 0
\end{eqnarray*}
since $\|\tilde{c}_\eps\|_{L^\infty(\Omega\times (0,T))}$ and $\eps^{\frac12}\|f'(\tilde{c}_\eps^\infty)\|_{L^2(\Omega\times (0,T))}$ are uniformly bounded in $\eps\in (0,1)$ due to $|f'(c_\eps^\infty)|^2\leq Cf(c_\eps^\infty)$.
Altogether we obtain
\begin{equation*}
  \lim_{\eps\to 0} \left(\xi_\eps(c_\eps^\infty)-\xi_\eps(c_\eps) \right) =0 \qquad \text{in}\ L^1(\Omega\times (0,T)),
\end{equation*}
which implies \eqref{eq:xiConv} due to \eqref{eq:xiLimit}.
  \end{proof}

\medskip

\noindent
{\bf Acknowledgments:} The authors acknowledge support from the German
Science Foundation through Grant Nos. AB285/4-1.


\begin{thebibliography}{10}

\bibitem{GeneralTwoPhaseFlow}
H.~Abels.
\newblock On generalized solutions of two-phase flows for viscous
  incompressible fluids.
\newblock {\em Interfaces Free Bound.}, 9:31--65, 2007.

\bibitem{ReviewGeneralTwoPhaseFlow}
H.~Abels.
\newblock On the notion of generalized solutions of two-phase flows for viscous
  incompressible fluids.
\newblock {\em RIMS K{\^{o}}ky{\^{u}}roku Bessatsu}, B1:1--15, 2007.

\bibitem{ModelH}
H.~Abels.
\newblock On a diffuse interface model for two-phase flows of viscous,
  incompressible fluids with matched densities.
\newblock {\em Arch. Rat. Mech. Anal.}, 194(2):463--506, 2009.

\bibitem{AbelsDepnerGarcke}
H.~Abels, D.~Depner, and H.~Garcke.
\newblock Existence of weak solutions for a diffuse interface model for
  two-phase flows of incompressible fluids with different densities.
\newblock {\em To appear in J. Math. Fluid Mech., Preprint, arXiv:/1111.2493},
  2011.

\bibitem{AbelsGarckeGruen2}
H.~Abels, H.~Garcke, and G.~Gr{\"{u}}n.
\newblock Thermodynamically consistent, frame indifferent diffuse interface
  models for incompressible two-phase flows with different densities.
\newblock {\em Math. Models Methods Appl. Sci.}, 22(3):1150013 (40 pages),
  2012.

\bibitem{NSMS}
H.~Abels and M.~R{\"{o}}ger.
\newblock Existence of weak solutions for a non-classical sharp interface model
  for a two-phase flow of viscous, incompressible fluids.
\newblock {\em Ann. Inst. H. Poincar\'e Anal. Non Lin\'eaire}, 26:2403--2424,
  2009.

\bibitem{AsymptoticCH}
H.~Abels and M.~Wilke.
\newblock Convergence to equilibrium for the {C}ahn-{H}illiard equation with a
  logarithmic free energy.
\newblock {\em Nonlinear Anal.}, 67(11):3176--3193, 2007.

\bibitem{StrongNSMS}
H.~Abels and M.~Wilke.
\newblock Well-posedness and qualitative behaviour of solutions for a two-phase
  {N}avier-{S}tokes-{M}ullins-{S}ekerka system.
\newblock {\em To appear in Interfaces Free Bound.; Preprint,
  arXiv:/1201.0179}, 2011.

\bibitem{AmbrosioEtAl}
L.~Ambrosio, N.~Fusco, and D.~Pallara.
\newblock {\em {Functions of bounded variation and free discontinuity
  problems.}}
\newblock {Oxford Mathematical Monographs. Oxford: Clarendon Press. xviii, 434
  p.}, 2000.

\bibitem{ChenLimitCH}
X.~Chen.
\newblock Global asymptotic limit of solutions of the {C}ahn-{H}illiard
  equation.
\newblock {\em J. Differential Geom.}, 44(2):262--311, 1996.

\bibitem{DenisovaTwoPhase}
I.~V. Denisova and V.~A. Solonnikov.
\newblock Solvability in {H}\"older spaces of a model initial-boundary value
  problem generated by a problem on the motion of two fluids.
\newblock {\em Zap. Nauchn. Sem. Leningrad. Otdel. Mat. Inst. Steklov. (LOMI)},
  188(Kraev. Zadachi Mat. Fiz. i Smezh. Voprosy Teor. Funktsii. 22):5--44, 186,
  1991.

\bibitem{Edwards}
R.~E. Edwards.
\newblock {\em Functional Analysis}.
\newblock Holt, Rinehardt and Winston, New York, Chicago, San Francisco,
  Toronto, London, 1965.

\bibitem{GurtinTwoPhase}
M.~E. Gurtin, D.~Polignone, and J.~Vi{\~n}als.
\newblock Two-phase binary fluids and immiscible fluids described by an order
  parameter.
\newblock {\em Math. Models Methods Appl. Sci.}, 6(6):815--831, 1996.

\bibitem{HohenbergHalperin}
P.C. Hohenberg and B.I. Halperin.
\newblock Theory of dynamic critical phenomena.
\newblock {\em Rev. Mod. Phys.}, 49:435--479, 1977.

\bibitem{KoehnePruessWilkeTwoPhase}
M.~K{\"o}hne, J.~Pr{\"u}ss, and M.~Wilke.
\newblock Qualitative behaviour of solutions for the two-phase
  {N}avier-{S}tokes equations with surface tension.
\newblock {\em Math. Ann., DOI 10.1007/s00208-012-0860-7}, 2012.

\bibitem{Modica1}
L.~Modica.
\newblock The gradient theory of phase transitions and the minimal interface
  criterion.
\newblock {\em Arch. Rational Mech. Anal.}, 98(2):123--142, 1987.

\bibitem{ModicaMortola1}
L.~Modica and S.~Mortola.
\newblock Un esempio di {$\Gamma \sp{-}$}-convergenza.
\newblock {\em Boll. Un. Mat. Ital. B (5)}, 14(1):285--299, 1977.

\bibitem{PlotnikovTwoPhase}
P.I. Plotnikov.
\newblock {Generalized solutions to a free boundary problem of motion of a
  non-Newtonian fluid.}
\newblock {\em Sib. Math. J.}, 34(4):704--716, 1993.

\bibitem{PruessSimonettTwoPhaseNSt}
J. Pr{\"u}ss and G. Simonett.
\newblock On the two-phase {N}avier-{S}tokes equations with surface tension.
\newblock {\em Interfaces Free Bound.}, 12(3):311--345, 2010.

\end{thebibliography}

\def\cprime{$'$} \def\ocirc#1{\ifmmode\setbox0=\hbox{$#1$}\dimen0=\ht0
  \advance\dimen0 by1pt\rlap{\hbox to\wd0{\hss\raise\dimen0
  \hbox{\hskip.2em$\scriptscriptstyle\circ$}\hss}}#1\else {\accent"17 #1}\fi}

\end{document}